\author[I.~Kapovich]{Ilya Kapovich}
\address{\tt Department of Mathematics, University of Illinois at
  Urbana-Champaign, 1409 West Green Street, Urbana, IL 61801, USA} \email{\tt
  kapovich@math.uiuc.edu}
\author[M.~Lustig]{Martin Lustig}\address{\tt I2M,
Centre de Math\'ematiques et Informatique,
Aix-Marseille Universit\'e,
39, rue F.~Joliot Curie, 
13453 Marseille 13, France} \email{\tt Martin.Lustig@univ-amu.fr}
\title[Patterson-Sullivan currents  and
and the Lipschitz metric]{Patterson-Sullivan currents, generic stretching factors and the asymmetric Lipschitz metric for
Outer space}
\newtheorem{thm}{Theorem}[section] \newtheorem{lem}[thm]{Lemma}
\newtheorem{cor}[thm]{Corollary} 
\newtheorem{prop}[thm]{Proposition} \theoremstyle{definition}
\newtheorem{defn}[thm]{Definition}
 \newtheorem{rem}[thm]{Remark}
\newtheorem{exmp}[thm]{Example}
\newtheorem{propdfn}[thm]{Proposition-Definition}
 \newtheorem{prob}[thm]{Problem}
\def\strutdepth{\dp\strutbox}
\def \ss{\strut\vadjust{\kern-\strutdepth \sss}}
\def \sss{\vtop to \strutdepth{
\baselineskip\strutdepth\vss\llap{$\diamondsuit\;\;$}\null}}
\def\strutdepth{\dp\strutbox}
\def \sst{\strut\vadjust{\kern-\strutdepth \ssss}}
\def \ssss{\vtop to \strutdepth{
\baselineskip\strutdepth\vss\llap{$\spadesuit\;\;$}\null}}
\def\strutdepth{\dp\strutbox}
\def \ssh{\strut\vadjust{\kern-\strutdepth \sssh}}
\def \sssh{\vtop to \strutdepth{
\baselineskip\strutdepth\vss\llap{$\heartsuit\;\;$}\null}}
\newcommand{\R}{\mathbb R}
\newcommand{\Z}{\mathbb Z}
\def\epsilon{\varepsilon}
\def\phi{\varphi}
\newcommand{\Curr}{\mbox{Curr}}
\newcommand{\Out}{\mbox{Out}}
\newcommand{\Aut}{\mbox{Aut}}
\newcommand{\FN}{F_N}   
\newcommand{\cvn}{\mbox{cv}_N}
\newcommand{\cvv}{\mbox{cv}_N^1}
\newcommand{\cvve}{\mbox{cv}_{N,\epsilon}^1}
\newcommand{\cvnbar}{\overline{\mbox{cv}}_N}
\newcommand{\CVN}{\mbox{CV}_N}
\newcommand{\CVNbar}{\overline{\mbox{CV}}_N}
\begin{document}

\begin{abstract}
We quantitatively relate the Patterson-Sullivant currents and generic stretching factors for free group automorphisms
to the asymmetric Lipschitz metric on Outer space and to Guirardel's intersection number.
\end{abstract}

\thanks{The first author was supported by the Collaboration Grant no. 279836 (2013-1018) from the Simons Foundation and by the NSF grant  DMS-1405146. Both authors  acknowledge support from U.S. National Science Foundation grants DMS 1107452, 1107263, 1107367 ``RNMS: GEometric structures And Representation varieties'' (the GEAR Network).}

\subjclass[2010]{Primary 20F65, Secondary 57M, 37B, 37D}

\maketitle


\section{Introduction}

For an integer $N\ge 2$ the \emph{unprojectivized Outer space} $\cvn$ is the set of all $\R$-trees equipped with a free discrete minimal isometric action of $F_N$, considered up to an $F_N$-equivariant isometry. We denote by $\cvv$ the set of all $T\in \cvn$ such that that the metric graph $T/F_N$ has volume $1$. 
The closure $\cvnbar$ of $\cvn$ with respect to the equivariant Gromov-Hausdorff convergence topology (or equivalently~\cite{Pau}, with respect to the hyperbolic length function topology) consists of all \emph{very small} minimal isometric actions of $F_N$ on $\R$-trees, again up to an $F_N$-equivariant isometry. There is a natural action of $\R_{>0}$ on $\cvnbar$ my multiplying the metric on a tree by a positive scalar. The subset $\cvn$ of $\cvnbar$ is invariant under this action, and the quotient $\CVN=\cvn/\R_{>0}$ is the \emph{projectivized Outer space}, originally introduced by Culler and Vogtmann in \cite{CV}. The quotient 
$\CVNbar =\cvnbar/\R_{>0}$ 
is compact, and is called the \emph{Thurston compactification} of $\CVN$.
All of the above spaces admit natural $\Out(F_N)$-actions. The space $\CVN$ is naturally $\Out(F_N)$-equivariantly homeomorphic to $\cvv$, but it is useful to remember that technically $\cvv$ and $\CVN$ are distinct objects.

There are three main quantitative tools for studying points of $\cvnbar$. The first is the so-called ``asymmetric Lipschitz distance''.  If $T\in \cvn$ and $S\in \cvnbar$, the \emph{extremal Lipschitz distortion} is given by
$\displaystyle\Lambda(T,S):=\sup_{w\in F_N \smallsetminus \{1\}} \frac{||w||_S}{||w||_T}$.  It is known (see \cite{FM11} for details) that this $\sup$ is actually a $\max$, and that $\Lambda(T,S)$ is the infimum of the Lipschitz constants of all the $F_N$-equivariant Lipschitz maps $T\to S$.  It is also known that for all $T,S\in \cvv$ we have $\Lambda(T,S)\ge 1$, and that the equality holds if and only if $T=S$. The \emph{asymmetric Lipschitz distance} is defined as $d_L(T,S):=\log \Lambda(T,S)$ where $T,S\in\cvv$.  Although it is usually the case that $d_L(T,S)\ne d_L(S,T)$, the asymmetric distance $d_L$ satisfies all the other properties of being a metric, and it is known that the topology defined by $d_L$ on $\cvv$ coincides with the standard subspace topology for $\cvv\subseteq \cvn$.  Moreover, for any $T,S\in \cvv$ there exists an (in general non-unique)
$d_L$-geodesic path
from $T$ to $S$ in $\cvv$, given by natural ``folding lines''~\cite{FM11}. The asymmetric distance $d_L$ is a useful tool in the study of the geometry of $\Out(F_N)$ and it has found significant recent applications, see, for example, ~\cite{AK,AK2,AKB,Be11,FM11,FM12,LSV,Wh}.

Another two important quantitative tools for studying Outer space are two notions of a ``geometric intersection number''. The first of these was introduced by Guirardel in~\cite{Gui05} in the general setting of groups acting by isometries on $\R$-trees.
Guirardel's intersection number $i(T,S)$ (where $T,S\in\cvnbar$) is defined as the co-volume of the ``core'' for the action of $F_N$ on $T\times S$.  Guirardel's intersection number is symmetric and $\Out(F_N)$-invariant, and for $T,S\in\cvn$ one always has $0\le i(T,S)<\infty$.  However, for trees in $\partial \cvn=\cvnbar\smallsetminus\cvn$ it is often the case that $i(T,S)=\infty$ and $i(\cdot,\cdot)$ is discontinuous when viewed as a function on $\cvnbar\times\cvnbar$. Still, Guirardel's intersection number is a highly useful tool when studying the asymptotic geometry of $\cvn$ itself, particularly when looking at orbits of subgroups of $\Out(F_N)$ in $\cvv$ and $\cvn$. Examples of such applications can be found in~\cite{BBC,CMP,CP10,CP12,Gui05,Hor}.

The second notion of a ``geometric intersection number'' was introduced by Kapovich and Lustig in~\cite{KL2}. They constructed a \emph{geometric intersection form} $\langle \cdot, \cdot \rangle:\cvnbar \times \Curr(F_N)\to \R_{\ge 0}$,  where $\Curr(F_N)$ is the space of \emph{geodesic currents} on $F_N$.  See Section~\ref{Sect:curr} below and~\cite{Ka1,Ka2,KL1,KL2} for the more information and the background on geodesic currents. The geometric intersection form is continuous, $\Out(F_N)$-equivariant, and, importantly, it always gives a finite output, that is, for every $T\in\cvnbar$ and $\mu\in \Curr(F_N)$ one has $0\le \langle T,\mu\rangle<\infty$.  If $T\in \cvnbar$ and 
$g\in F_N \smallsetminus \{1\}$
then $\langle T,\eta_g\rangle=||g||_T$, where $\eta_g\in \Curr(F_N)$ is the ``counting current'' associated with $g$. By its very definition, $\langle \cdot,\cdot\rangle$ is an asymmetric gadget. However, its good properties, including finiteness and global continuity on $\cvnbar$, make the geometric intersection form a useful tool that has also found a number of significant applications to the study of the dynamics and geometry of $\Out(F_N)$. See, for example, ~\cite{BF08,BR12,CK,CP12a,CH13,CHL3,Ha09,Ha12,KL2,KL3,KL4,MR13,Rey12}.

For $\epsilon\ge 0$ we denote by $\cvve$ the set of all $T\in\cvv$ such that the length of the shortest simple closed loop in $T/F_N$ is $\ge \epsilon$. The set $\cvve$ is called the \emph{$\epsilon$-thick} part of $\cvv$.  Horbez~\cite{Hor} showed that, for any fixed $\epsilon>0$, if $T,S\in\cvve$,  one has 
\[
\frac{1}{K_1} \log  i_c(T,S)-K_2 \le d_L(T,S) \le K_1 \log  i_c(T,S) +K_2\tag{$\ddag$}
\]
for some constants $K_1\ge 1$, $K_2\ge 0$ depending only on $N$ and
$\epsilon$. Here $i_c(T,S)$ is the combinatorial version of
Guirardel's intersection number, where $i_c(T,S)$ is defined as the
number of 2-cells in $Core(T\times S)/F_N$, while
$i(T,S)$ is defined as the sum of the areas of all the 2-cells in
$Core(T\times S)/F_N$. Thus if, for $S,T\in\cvv$ the trees $T_0,S_0\in\cvn$ are obtained from $T$
and $S$ by making all edges have length $1$, then $i_c(T,S):=i(T_0,S_0)$.
Also, following the usual convention, in $(\ddag)$ we interpret $\log 0$ as $\log 0=0$.

In the present paper, for $T,S\in \cvve$ we relate $\Lambda(T,S)$ to a natural quantity defined in terms of $\langle \cdot, \cdot \rangle$. Via Horbez' result, this connection also relates the geometric intersection form  $\langle \cdot , \cdot \rangle$ to Guirardel's geometric intersection number $i(\cdot,\cdot)$.  Following the results of Furman~\cite{Fur} in the general set-up of word-hyperbolic groups, in~\cite{KN} Kapovich and Nagnibeda associated to every $T\in \cvn$ its \emph{Patterson-Sullivan current}. In general, the Patterson-Sullivan current is naturally defined only up to a multiplication by a positive scalar. Normalizing by the geometric intersection number with $T$ provides a canonical choice. Thus for a tree $T\in \cvn$ we denote by $\mu_T\in\Curr(F_N)$ the \emph{Patterson-Sullivan current} associated to $T$, normalized so that $\langle T,\mu_T\rangle=1$.
We refer the reader to Section~\ref{Sect:PS} below and to ~\cite{Fur,KN,KN2} for the precise definitions and background information about the Patterson-Sullivan currents. A key result obtained by Kapovich and Nagnibeda in~\cite{KN} shows that the map $J_{PS}:\cvv\to\Curr(F_N), \, T\mapsto\mu_T$ is a continuous $\Out(F_N)$-equivariant embedding.

Our main result (c.f. Theorem~\ref{thm:PS} below) is:
\begin{thm}\label{thm:A}
Let $N\ge 2$ and $\epsilon>0$. Then there exist constants $0<\delta_1\le \delta_2$
such that for every $T\in\cvve$ and every $S\in\cvnbar$  we have:
\[
\delta_1\le \frac{\langle S,\mu_T\rangle}{\Lambda(T,S)}\le \delta_2
\]
Therefore there exists a constant $c=c(N,\epsilon)>0$ such that for every $T\in \cvve$ and $S\in\cvv$ we have:
\[
\left| \log \langle S,\mu_T\rangle  - d_L(T,S)\right|\le c
\]
\end{thm}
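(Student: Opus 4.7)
The upper bound is the easy direction. By Francaviglia--Martino~\cite{FM11} there exists an $F_N$-equivariant Lipschitz map $T\to S$ with Lipschitz constant $\Lambda(T,S)$, so $\|w\|_S\le \Lambda(T,S)\|w\|_T$ for every nontrivial $w\in F_N$. Rewriting this as $\langle S,\eta_w\rangle \le \Lambda(T,S)\langle T,\eta_w\rangle$ for every counting current $\eta_w$, and invoking density of rational currents in $\Curr(F_N)$ together with bilinearity and continuity of $\langle\cdot,\cdot\rangle$~\cite{KL2}, we obtain $\langle S,\mu\rangle\le \Lambda(T,S)\langle T,\mu\rangle$ for every $\mu\in \Curr(F_N)$. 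Applying this to $\mu=\mu_T$ and using the normalization $\langle T,\mu_T\rangle = 1$ gives the upper bound with $\delta_2=1$.

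For the lower bound, the plan is a compactness-and-continuity argument. Define
\[
\Phi(T,S):=\frac{\langle S,\mu_T\rangle}{\Lambda(T,S)}
\]
for $T\in\cvve$ and $S\in\cvnbar$; here $\Lambda(T,S)>0$ because minimality of the $F_N$-action on $S$ forces some element to act hyperbolically, and $\Lambda(T,S)<\infty$ by the Lipschitz map above. Using the homogeneities $\mu_{cT}=c^{-1}\mu_T$, $\Lambda(cT,S)=c^{-1}\Lambda(T,S)$, $\Lambda(T,cS)=c\Lambda(T,S)$, and $\langle cS,\mu\rangle=c\langle S,\mu\rangle$, one checks that $\Phi$ is invariant under independent rescaling in each argument, and is invariant under the diagonal $\Out(F_N)$-action. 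The Thurston compactification $\cvnbar/\R_{>0}$ is compact, and a standard Mumford-type argument for Outer space shows that $\cvve/\Out(F_N)$ is compact. Picking a compact fundamental domain $D\subset\cvve$ for the $\Out(F_N)$-action and exploiting the diagonal invariance to translate $T$ into $D$, we may assume $(T,S)\in D\times\cvnbar$, a compact space.

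The main work then consists of (a) establishing joint continuity of $\Phi$, and (b) proving pointwise strict positivity. Continuity combines: Kapovich--Nagnibeda's continuity of $T\mapsto\mu_T$ on $\cvv$; the continuity of $\langle\cdot,\cdot\rangle$ on $\cvnbar\times\Curr(F_N)$~\cite{KL2}; and continuity of $\Lambda$ on $\cvve\times\cvnbar$. For the last ingredient, the Francaviglia--Martino candidate description guarantees that, for each $T\in\cvve$, the Lipschitz constant is realized on a finite set of candidate conjugacy classes whose $T$-length is bounded in terms of $N$ and $\epsilon$; hence $\Lambda(T,S)=\max_{w\in \mathrm{Cand}(T)}\|w\|_S/\|w\|_T$ is a max over a uniformly bounded family and depends continuously on $(T,S)$. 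For strict positivity, since $T\in\cvv$ acts freely and minimally, $\mu_T$ has full support on the space of biinfinite geodesics in $F_N$; since $S\in\cvnbar$ is minimal, some $w\in F_N$ satisfies $\|w\|_S>0$, i.e., $\langle S,\eta_w\rangle>0$. Full support of $\mu_T$ together with continuity of $\langle S,\cdot\rangle$ then forces $\langle S,\mu_T\rangle>0$.

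The hardest point I expect is the strict positivity step, because the vanishing locus of $\langle S,\cdot\rangle$ on $\Curr(F_N)$ can be subtle for $S$ in the boundary, and one must show that the full-support property of $\mu_T$ is strong enough to guarantee a definite positive contribution from cylinders that encode a hyperbolic conjugacy class of $S$; this will probably require invoking the cylinder description of the Kapovich--Lustig intersection pairing directly rather than only its formal continuity. Once (a) and (b) are in place, a continuous strictly positive function on the compact space $D\times\cvnbar/\R_{>0}$ attains a positive minimum $\delta_1$, and by diagonal invariance $\Phi\ge\delta_1$ on all of $\cvve\times\cvnbar$, completing the first displayed inequality. The bound $|\log\langle S,\mu_T\rangle-d_L(T,S)|\le c$ for $S\in\cvv$ follows immediately by taking logarithms, with $c=\max(|\log\delta_1|,|\log\delta_2|)=|\log\delta_1|$.
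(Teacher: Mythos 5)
Your argument follows essentially the same route as the paper: reduce to a compact domain using $\Out(F_N)$-invariance and projectivization, establish continuity of $(T,S)\mapsto\langle S,\mu_T\rangle/\Lambda(T,S)$ via the Francaviglia--Martino candidate sets, Kapovich--Nagnibeda's continuity of $T\mapsto\mu_T$, and Kapovich--Lustig's continuity of $\langle\cdot,\cdot\rangle$, and invoke the filling property of $\mu_T$ for strict positivity (which, as you correctly anticipate, needs the Kapovich--Lustig (GAFA 2010) characterization of when $\langle S,\mu\rangle=0$ via dual laminations, not just formal continuity). One small sharpening in your write-up: the upper bound via density of rational currents yields the explicit value $\delta_2=1$, which the paper derives only from compactness and does not make explicit.
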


Using the result of Horbez~\cite{Hor} stated in $(\ddag)$ above, Theorem~\ref{thm:A} directly implies (using the notation introduced after $(\ddag)$):
\begin{cor}\label{cor:guir}
Let $N\ge 2$ and $\epsilon>0$.  Then there exist constants $C_1,C_2\ge 1$ such that for any $T,S\in \cvve$ we have
\[
\frac{1}{C_1}\log  i_c(T,S)-C_2 \le \log \langle S,\mu_T\rangle  \le C_1 \log i_c(T,S) +C_2.
\]
\end{cor}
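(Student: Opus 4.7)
The approach is a direct chaining of the two two-sided estimates that are already in hand. By Theorem~\ref{thm:A} there is a constant $c = c(N,\epsilon) > 0$ such that for every $T \in \cvve$ and every $S \in \cvv$ one has
\[
d_L(T,S) - c \le \log \langle S,\mu_T\rangle \le d_L(T,S) + c.
\]
Since the corollary hypothesizes $T,S \in \cvve \subseteq \cvv$, this applies to the given pair. On the other hand, Horbez's inequality $(\ddag)$ yields constants $K_1 \ge 1$ and $K_2 \ge 0$ depending only on $N$ and $\epsilon$ such that, for the same pair $T,S \in \cvve$,
\[
\tfrac{1}{K_1}\log i_c(T,S) - K_2 \le d_L(T,S) \le K_1 \log i_c(T,S) + K_2.
\]

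Combining the two displays termwise gives
\[
\tfrac{1}{K_1}\log i_c(T,S) - K_2 - c \le \log \langle S,\mu_T\rangle \le K_1 \log i_c(T,S) + K_2 + c,
\]
so setting $C_1 := K_1$ and $C_2 := K_2 + c$ produces the required constants. This is really the entire argument: the corollary is an arithmetic consequence of Theorem~\ref{thm:A} and $(\ddag)$, and there is no genuine obstacle to overcome beyond verifying that the hypotheses of both results are met.

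The only bookkeeping point worth mentioning concerns the degenerate case $i_c(T,S) = 0$, where the convention $\log 0 = 0$ is in force. Here $(\ddag)$ forces $d_L(T,S) \le K_2$ (and $d_L(T,S)\ge 0$ by the definition of the asymmetric Lipschitz metric on $\cvv$), and then Theorem~\ref{thm:A} gives $|\log \langle S,\mu_T\rangle| \le K_2 + c = C_2$, which is consistent with both sides of the desired double inequality once $\log i_c(T,S)$ is read as $0$. Thus no separate treatment is needed.
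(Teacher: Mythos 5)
Your proof is correct and follows precisely the route the paper indicates: the corollary is the straightforward concatenation of the two two-sided estimates from Theorem~\ref{thm:A} and Horbez's $(\ddag)$, with $C_1=K_1$ and $C_2=K_2+c$ (taking $\max(C_2,1)$ if one wants $C_2\ge 1$ literally). Your remark on the $\log 0=0$ convention is a sensible extra check and does not change the argument.
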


The proof of Theorem~\ref{thm:A} relies on several results regarding geodesic currents, particularly the result of Kapovich and Lustig~\cite{KL2} about the 
continuity of the 
geometric intersection form on $\cvnbar\times \Curr(F_N)$, mentioned above, and the result of Kapovich and Nagnibeda~\cite{KN} that the Patterson-Sullivan map $\cvv\to\Curr(F_N)$, $T\mapsto \mu_T$, is a continuous $\Out(F_N)$-equivariant embedding.  The most crucial point in the argument uses a result of Kapovich and Lustig~\cite{KN} which characterizes the case $\langle S,\nu\rangle=0$, where $S\in\cvnbar$ and $\nu\in\Curr(F_N)$ are arbitrary.  In particular, this characterization implies that every current $\mu$ with full support (such as the Patterson-Sullivan current $\mu_T$ for $T\in\cvv$) is \emph{filling}, that is, satisfies $\langle S,\mu\rangle >0$ for every $S\in\cvnbar$.  Modulo the tools mentioned above, the proof of Theorem~\ref{thm:A} is not difficult (although the proof does require an extra trick exploiting the $\Out(F_N)$-equivariant nature of certain functions and some nice properties of $d_L$). Still, 
Theorem~\ref{thm:A} and its applications obtained here do provide a conceptual clarification regarding the quantitative relationships between the two notions of a geometric intersection number used in the study of $\Out(F_N)$, and about their relationship to the asymmetric Lipschitz distance.  


One of our main motivations for this paper has been to  better understand the properties of ``generic stretching factors'' for free group automorphisms.

\begin{propdfn}\label{defn:gen}\cite{KKS}
For any free basis $A$ of $F_N$ and any $S\in \cvnbar$ there exists  a number $\lambda_A(S)\ge 0$ 
with the following properly.

For a.e. trajectory $\xi=y_1y_2\dots y_n\dots$ of the simple non-backtracking random walk on $F_N$ with respect to $A$ (that is, for a ``random'' geodesic ray $\xi=y_1y_2\dots y_n\dots$ over $A^{\pm 1}$ with $y_i\in A^{\pm 1}$) we have $||y_1y_2\dots y_n||_A=n+o(n)$ and
\[
\lim_{n\to\infty} \frac{||y_1y_2\dots y_n||_S}{n}=\lim_{n\to\infty} \frac{||y_1y_2\dots y_n||_S}{||y_1y_2\dots y_n||_A}=\lambda_A(S).
\]

The number $\lambda_A(S)$ is called~\cite{Ka2,KKS} the \emph{generic stretching factor} of $S$ with respect to $A$.
\end{propdfn}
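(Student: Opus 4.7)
The existence of $\lambda_A(S)$ will follow from Kingman's subadditive ergodic theorem applied to a natural cocycle over the shift associated with the non-backtracking random walk, combined with a control on the discrepancy between translation length in $S$ and orbit displacement.

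First, model the random walk as a Markov chain on the state space $A^{\pm 1}$: the state records the last letter read, and each step transitions uniformly among the $2N-1$ letters distinct from the inverse of the current state. The uniform distribution on $A^{\pm 1}$ is stationary, and the chain is irreducible and aperiodic, hence the one-sided shift $(\Omega,\sigma,\mathbb{P})$ on reduced sequences $\xi=y_1y_2\ldots$ is ergodic (in fact mixing). Each initial segment $y_1\cdots y_n$ is reduced of length exactly $n$; its cyclically reduced length fails to be $n$ only by the amount of cancellation between the suffix and prefix of $y_1\cdots y_n$, which by the Markov property is essentially geometrically distributed and hence $O(\log n)$ almost surely. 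This yields the asserted limit $\|y_1\cdots y_n\|_A=n+o(n)$.

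Next, fix $S\in\cvnbar$ and a base point $x_0\in S$. Define $a_n(\xi):=d_S\bigl(x_0,(y_1\cdots y_n)\,x_0\bigr)$. The triangle inequality combined with the $\FN$-equivariance of the $S$-metric gives the subadditive cocycle relation $a_{n+m}(\xi)\le a_n(\xi)+a_m(\sigma^n\xi)$, while $a_n\le nL$ where $L:=\max_{a\in A^{\pm1}}d_S(x_0,ax_0)$. Kingman's subadditive ergodic theorem combined with the ergodicity of $\sigma$ then produces a constant $\lambda_A(S)\ge 0$ such that $a_n(\xi)/n\to\lambda_A(S)$ for $\mathbb{P}$-a.e.\ $\xi$.

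The main obstacle is passing from $a_n(\xi)$ to $\|y_1\cdots y_n\|_S$. For any hyperbolic element $w$ acting on an $\R$-tree $S$ one has the identity $d_S(x_0,wx_0)-\|w\|_S=2\,d_S(x_0,\Ax_S(w))$, so we must establish that $d_S(x_0,\Ax_S(y_1\cdots y_n))=o(n)$ almost surely. This can be deduced from the fact that for a.e.\ $\xi$ the sequence $(y_1\cdots y_n)$ converges to a boundary point $\xi^+\in\partial\FN$, the $\FN$-equivariant boundary extension $\partial\FN\to\partial S$ sends $\xi^+$ to a well-defined point of $\partial S$, and the axes $\Ax_S(y_1\cdots y_n)$ fellow-travel a fixed bi-infinite line in $S$ passing within bounded $S$-distance from $x_0$; bi-infinite extension of the walk combined with recurrence of the backward orbit provides the uniform control needed. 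For $S\in\cvnbar\smsm\cvn$, where the action may be non-free or have degenerate structure, the Kingman argument applies verbatim to the canonical length function $\|\cdot\|_S$, and the same boundary-extension/fellow-travelling argument adapts using the structure of very small actions; this completes the construction of $\lambda_A(S)$.
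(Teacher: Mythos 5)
The paper does not prove this statement; it is a Proposition--Definition cited to \cite{KKS}, with the surrounding text only remarking that the existence of $\lambda_A(S)$ ``follows from general ergodic-theoretic considerations, as observed in \cite{KKS}.'' So the comparison here is to \cite{KKS} rather than to any argument in the present paper. Your choice of Kingman's subadditive ergodic theorem applied to the displacement $a_n(\xi)=d_S\bigl(x_0,(y_1\cdots y_n)x_0\bigr)$ over the stationary shift is indeed the right engine and matches the method of \cite{KKS}, and your treatment of $\|y_1\cdots y_n\|_A=n+o(n)$ via the cyclic-reduction amount $k_n$ is essentially correct. The genuine gap is in passing from $a_n$ to $\|y_1\cdots y_n\|_S$. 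Writing $g_n:=y_1\cdots y_n$, the assertion that ``the axes $\Ax_S(g_n)$ fellow-travel a fixed bi-infinite line in $S$ passing within bounded $S$-distance from $x_0$'' is false: already in $T_A$ the backward rays of these axes are governed by the cyclic reductions $g_n=u_nv_nu_n^{-1}$ and point in mutually unrelated directions, so they fellow-travel neither one another nor any fixed line, and the two-sided extension of the walk does not repair this --- it controls the Gromov product $(g_nx_0\mid g_{-n}x_0)_{x_0}$, whereas what you need is $(g_nx_0\mid g_n^{-1}x_0)_{x_0}$, and $g_n^{-1}\ne g_{-n}$.

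What actually closes the gap is that $d_{T_A}(x_0,\Ax_{T_A}(g_n))=k_n=o(n)$ --- the same cyclic-reduction quantity you already estimated --- and that this transfers to $S$. For $S\in\cvn$ one transfers it through the $\FN$-equivariant quasi-isometry $T_A\to S$, using coarse preservation of Gromov products between $0$-hyperbolic spaces. For $S\in\cvnbar\smallsetminus\cvn$ there is no quasi-isometry, and the correct tool is the bounded backtracking property of very small $\FN$-trees: an $\FN$-equivariant map $q:T_A\to S$ with $q(x_0)=p$ sends $T_A$-geodesics to paths staying within $\BBT(S)$ of the corresponding $S$-geodesics, which gives $(g_np\mid g_n^{-1}p)_p\le L\,k_n+\BBT(S)=o(n)$ with $L:=\max_{a\in A^{\pm 1}}d_S(p,ap)$, hence $\|g_n\|_S\ge a_n-o(n)$. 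Your appeal to an $\FN$-equivariant boundary map $\partial\FN\to\partial S$ is not the mechanism that makes this work (for $S\in\cvnbar\smallsetminus\cvn$ this is the Coulbois--Hilion--Lustig $\mathcal Q$-map, which need not be a homeomorphism onto $\partial S$). Finally, you invoke $d_S(p,wp)-\|w\|_S=2(wp\mid w^{-1}p)_p$ only for hyperbolic $w$; for elliptic $w$ (which can occur when $S\in\cvnbar\smallsetminus\cvn$) this weakens to the one-sided inequality $\|w\|_S\ge d_S(p,wp)-2(wp\mid w^{-1}p)_p$, which is exactly the direction the argument needs, so the proof goes through without a case distinction once this is made explicit.
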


The term ``non-backtracking'' in ``non-backtracking simple random walk'' refers to the fact that for this random walk, if $x,y\in A\cup A^{-1}$, the transition probability for $x$ to be followed by $y$ is equal to $1/(2N-1)$ if $y\ne x^{-1}$ and is equal to $0$ if $y=x^{-1}$. Thus the trajectories of this random walk are semi-infinite freely reduced words over $A^{\pm 1}$.
Informally, the generic stretching factor $\lambda_A(S)\ge 0$ captures the distortion $\frac{||y_1y_2\dots y_n||_S}{n}$ where $y_1\dots y_n$ is a ``random'' freely reduced word of length $n$ over $A$, as $n$ tends to infinity.
The existence of $\lambda_A(S)\ge 0$ follows from general ergodic-theoretic considerations, as observed in \cite{KKS}. As noted in Remark~\ref{rem:>0} below, one actually has $\lambda_A(S)>0$ for every $S\in\cvnbar$.

Let $A$ be a free basis of $F_N$ and consider the Cayley tree $T_A\in
\cvn$, with all edges of length $1/N$, so that $T_A\in \cvv$. Thus for
every $w\in F_N$ we have $||w||_A=N||w||_{T_A}$ where $||w||_A$ is the
cyclically reduced length of $w$ over $A^{\pm 1}$. It is known that the Patterson-Sullivan current $\mu_{T_A}$ is equal to the ``uniform current'' $\nu_A$ on $F_N$ corresponding to $A$. Using the interpretation of $\langle S,\nu_A\rangle$ as the ``generic stretching factor''  $\lambda_A(S)$ of $S\in \cvn$ with respect to $A$~\cite{Ka2}, as a consequence of Theorem~\ref{thm:A} we also obtain (see Theorem~\ref{thm:gen1'} below):

\begin{cor}\label{cor:gen1}
Let $N\ge 2$. There exists a constant $\delta=\delta(N)\in (0,1)$ with the following property:

For any free basis $A$ of $F_N$ and any $S\in \cvnbar$ we have
\[
0<\delta\le \frac{\lambda_A(S)}{\Lambda(T_A,S)}\le \frac{1}{N}.  \tag{$\dag$}
\]
\end{cor}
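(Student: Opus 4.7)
My plan is to view Corollary~\ref{cor:gen1} as an immediate specialization of Theorem~\ref{thm:A} to the family of Cayley trees $\{T_A\}$, combined with the two identifications stated just before the corollary: $\mu_{T_A}=\nu_A$ and $\langle S,\nu_A\rangle=\lambda_A(S)$.

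The crux of the argument, and really the only substantive point, is the observation that for every free basis $A$ of $F_N$ the tree $T_A$ lies in the $\epsilon$-thick part $\cvve$ with $\epsilon:=1/N$, independently of the choice of $A$. Indeed, $T_A/F_N$ is a rose with $N$ petals of length $1/N$ each, so the shortest simple closed loop in it has length exactly $1/N$. This is what lets Theorem~\ref{thm:A} be applied with a single value $\epsilon=1/N$ valid for all bases at once; the resulting constants then depend only on $N$, as required by $(\dag)$.

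Given this observation, the lower bound in $(\dag)$ falls out immediately. Applying Theorem~\ref{thm:A} with $T:=T_A$ produces a constant $\delta_1=\delta_1(N)>0$ such that $\langle S,\mu_{T_A}\rangle/\Lambda(T_A,S)\ge\delta_1$ for every $S\in\cvnbar$, and rewriting the numerator using $\mu_{T_A}=\nu_A$ together with $\langle S,\nu_A\rangle=\lambda_A(S)$ gives $\lambda_A(S)/\Lambda(T_A,S)\ge\delta_1$, uniformly in $A$.

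For the upper bound $\lambda_A(S)/\Lambda(T_A,S)\le 1/N$ I would instead argue directly from the definitions, without invoking Theorem~\ref{thm:A}. The scaling $\|w\|_{T_A}=\|w\|_A/N$ yields
\[
\Lambda(T_A,S)\;=\;\sup_{w\ne 1}\frac{\|w\|_S}{\|w\|_{T_A}}\;=\;N\sup_{w\ne 1}\frac{\|w\|_S}{\|w\|_A},
\]
while each ratio $\|y_1\cdots y_n\|_S/\|y_1\cdots y_n\|_A$ appearing in the definition of $\lambda_A(S)$ is itself of the form $\|w\|_S/\|w\|_A$ for $w=y_1\cdots y_n\ne 1$, so
\[
\lambda_A(S)\;=\;\lim_{n\to\infty}\frac{\|y_1\cdots y_n\|_S}{\|y_1\cdots y_n\|_A}\;\le\;\sup_{w\ne 1}\frac{\|w\|_S}{\|w\|_A}\;=\;\frac{\Lambda(T_A,S)}{N}.
\]
Setting $\delta:=\delta_1(N)$ then gives the desired constant; the upper bound we just derived automatically forces $\delta\le 1/N<1$, hence $\delta\in(0,1)$. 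I do not anticipate any real obstacle beyond the basis-independent thickness observation; once that is noted, the lower bound is a direct quotation of Theorem~\ref{thm:A} under the stated identifications and the upper bound is an elementary sup-versus-limit estimate.
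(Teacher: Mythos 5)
Your proof is correct and follows essentially the same route as the paper's: apply Theorem~\ref{thm:A} (= Theorem~\ref{thm:PS}) with $\epsilon=1/N$, using the observation that $T_A\in\cvve$ for every free basis $A$, together with the identifications $\mu_{T_A}=\nu_A$ and $\langle S,\nu_A\rangle=\lambda_A(S)$ for the lower bound, and an elementary sup-versus-limit estimate from $\|w\|_A = N\|w\|_{T_A}$ for the upper bound (which is precisely the paper's Lemma~\ref{lem:1/N}). The only cosmetic difference is how you ensure $\delta<1$: you note this is forced by the already-derived upper bound, whereas the paper simply shrinks $\delta_1$ if necessary; both are fine.
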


We are particularly interested in relationships between generic stretching factors and extremal stretching factors in the context of Cayley trees of $F_N$ and of elements of $\Out(F_N)$. Note that if $A$ is a free basis of $A$ then $NT_A\in\cvn$ is the standard Cayley graph of $F_N$ with respect to $A$, where all edges have length $1$. 

If $\phi\in\Out(F_N)$ and $w\in F_N$, then, since $\phi$ is an outer automorphism, it acts on the conjugacy classes of elements of $F_N$ (rather than on elements of $F_N$).
By convention, for $\phi\in \Out(F_N)$ and $w\in F_N$, if $\phi(w)$ appears in an expression that depends only on the conjugacy class $\phi([w])$,  we will use $\phi(w)$ to mean any representative of that conjugacy class. 


\begin{defn}[Extremal and generic stretching factors of automorphisms]
\label{gen-stretch}
Let $A$ be a free basis of $F_N$ and let $\phi\in\Out(F_N)$.

Denote
\[
\Lambda_A(\phi):=\Lambda(T_A, T_A\phi)=\sup_{w\ne 1} \frac{||\phi(w)||_A}{||w||_A}=e^{d_L(T_A,T_A\phi)}
\]
and refer to $\Lambda_A(\phi)$ as the \emph{extremal stretching factor} for $\phi$ with respect to $A$.

Also, denote 
$\lambda_A(\phi):=\lambda_A(NT_A\phi) = N\lambda_A(T_A\phi)$.

Thus for a.e.   trajectory $\xi=y_1\dots y_n\dots$ of the simple non-backtracking random walk on $F_N$ with respect to $A$ we have
\[
\lambda_A(\phi)=\lim_{n\to\infty} \frac{||\phi(y_1y_2\dots y_n)||_A}{n}=\lim_{n\to\infty} \frac{||\phi(y_1y_2\dots y_n)||_A}{||y_1y_2\dots y_n||_A}.
\]
We call $\lambda_A(\phi)$ the \emph{generic stretching factor} of $\phi$ with respect to $A$.
\end{defn}

Thus $\Lambda_A(\phi)$ measures the maximal distortion $\frac{||\phi(w)||_A}{||w||_A}$ 
as $w$ varies over all non-trivial elements of
$F_N$, while $\lambda_A(\phi)$ captures the ``generic distortion''
$\frac{||\phi(w)||_A}{||w||_A}$, 
where $w$ is a ``long random'' freely reduced (or cyclically reduced) word over $A^{\pm 1}$.  In practice, $\Lambda_A(\phi)$ is easy to compute since it is known (see, e.g. \cite{FM11}) that $\Lambda_A(\phi)=\max_{1\le ||w||\le 2} \frac{||\phi(w)||_A}{||w||_A}$.

The generic stretching factors
$\lambda_A(\phi)$ were introduced in \cite{KKS} and further studied in
\cite{Fra,Ka2,KL3,Sharp}. In particular, it is proved in \cite{KKS} that for every $\phi\in\Out(F_N)$ the number $\lambda_A(\phi)$ is rational and moreover, $2N\lambda_A(\phi)\in \mathbb Z[\frac{1}{2N-1}]$ and that there exists an algorithm that, given $\phi$, computes $\lambda_A(\phi)$.  The definitions directly imply that  $\lambda_A(\phi) \leq \Lambda_A(\phi)$.
However, other than this fact, the quantitative relationship between $\Lambda_A(\phi)$ and $\lambda_A(\phi)$ remained unclear.

Let $N\ge 2$ and $F_N=F(a_1,\dots, a_N)$ with $A=\{a_1,\dots, a_N\}$. 
Define 
\[
\rho_N:=\inf_{\phi\in \Out(F_N)} \frac{\lambda_A(\phi)}{\Lambda_A(\phi)}.
\]

Since for every $\phi\in \Out(F_N)$ we have $T_A, T_A\phi\in\cvve$ with $\epsilon=\frac{1}{N}$, Corollary~\ref{cor:gen1} directly implies: 
\begin{thm}\label{cor:gen2}
For every $N\ge 2$ we have $\rho_N>0$.
\end{thm}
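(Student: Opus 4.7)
The plan is to derive Theorem~\ref{cor:gen2} as an essentially immediate consequence of Corollary~\ref{cor:gen1}, with only a small bookkeeping step to reconcile the two normalization conventions involved. First I would verify that the hypothesis of Corollary~\ref{cor:gen1} applies to $T_A$: for the free basis $A=\{a_1,\dots,a_N\}$, the Cayley tree $T_A\in\cvv$ has all edges of length $1/N$, so the quotient $T_A/F_N$ is an $N$-petal rose whose shortest simple closed loop has length $1/N$; thus $T_A\in\cvve$ with $\epsilon=1/N$. Corollary~\ref{cor:gen1} then supplies a constant $\delta=\delta(N)\in(0,1)$ such that
\[
\lambda_A(S)\ \ge\ \delta\cdot \Lambda(T_A,S) \qquad\text{for every } S\in\cvnbar.
\]

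Next I would apply this with $S=T_A\phi$, which lies in $\cvn\subseteq\cvnbar$ because $\Out(F_N)$ preserves $\cvn$. This yields $\lambda_A(T_A\phi)\ge \delta\cdot \Lambda(T_A,T_A\phi)$. Multiplying through by $N$ and invoking the definitions $\lambda_A(\phi):=N\lambda_A(T_A\phi)$ and $\Lambda_A(\phi):=\Lambda(T_A,T_A\phi)$ from Definition~\ref{gen-stretch}, one obtains
\[
\lambda_A(\phi)\ \ge\ N\delta\cdot \Lambda_A(\phi).
\]
Since $\Lambda_A(\phi)\ge 1>0$ for every $\phi\in\Out(F_N)$, dividing and then taking the infimum over $\phi$ gives
\[
\rho_N\ =\ \inf_{\phi\in\Out(F_N)}\frac{\lambda_A(\phi)}{\Lambda_A(\phi)}\ \ge\ N\delta\ >\ 0,
\]
as required.

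There is no genuine obstacle in this final step: the substantive content is already absorbed into the proofs of Theorem~\ref{thm:A} and Corollary~\ref{cor:gen1}, which provide the uniform two-sided comparison between the Patterson--Sullivan pairing $\langle S,\mu_{T_A}\rangle=\lambda_A(S)$ and the extremal Lipschitz distortion $\Lambda(T_A,S)$. The only potential pitfall is a notational one: the generic and extremal stretching factors $\lambda_A(\phi),\Lambda_A(\phi)$ of an automorphism are defined via the integer-edge Cayley graph $NT_A$, while Corollary~\ref{cor:gen1} is phrased using the volume-one normalization $T_A\in\cvv$, so one must track the resulting factor of $N$ carefully — but this is routine and the lower bound $\rho_N\ge N\delta>0$ survives it intact.
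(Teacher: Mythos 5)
Your proof is correct and follows essentially the same route as the paper's: specialize Corollary~\ref{cor:gen1} to $S=T_A\phi$, use $T_A\in\cvve$ with $\epsilon=1/N$, and reconcile the factor of $N$ coming from the two normalizations $\lambda_A(\phi)=N\lambda_A(T_A\phi)$ and $\Lambda_A(\phi)=\Lambda(T_A,T_A\phi)$. The only cosmetic difference is that you make the resulting lower bound $\rho_N\ge N\delta$ explicit, while the paper simply asserts the positivity of the infimum.
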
 

Therefore for every $\phi\in \Out(F_N)$ we have
\[
0<\rho_N\le \frac{\lambda_A(\phi)}{\Lambda_A(\phi)}\le  1.
\]

Our proof that $\rho_N>0$ does not give any
explicit quantitative information about $\rho_N$. It would be
interesting to find some explicit bounds from above and below for $\rho_N$, and perhaps to even compute $\rho_N$,
at least for small values of $N$. We show in Proposition~\ref{prop:limit}  that $\displaystyle \lim_{N\to\infty} \rho_N=0$ and that $\rho_N=O(\frac{1}{N})$.

As another application, we obtain (c.f. Corollary~\ref{cor:gen3'} below):
\begin{cor}\label{cor:gen3}
Let $N\ge 2$ and $F_N=F(a_1,\dots, a_n)$ with $A=\{a_1,\dots, a_N\}$.
There exists $D=D(N)\ge 1$ such that for every $\phi\in \Out(F_N)$ we
have
\[
\frac{1}{D} \log \lambda_A(\phi) \le \log \lambda_A(\phi^{-1})\le  D
\log \lambda_A(\phi) .
\]
\end{cor}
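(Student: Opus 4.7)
The plan is to reduce the symmetric comparison of $\log \lambda_A(\phi)$ and $\log \lambda_A(\phi^{-1})$ to a known quasi-symmetry property of the asymmetric Lipschitz metric $d_L$ on the thick part of Outer space. I will use Corollary~\ref{cor:gen1} to pass between the generic and extremal stretching factors, and the $\Out(F_N)$-invariance of $d_L$ to swap the roles of $\phi$ and $\phi^{-1}$.

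First, I apply Corollary~\ref{cor:gen1} at $\epsilon = 1/N$ (the thickness of the normalized Cayley tree $T_A \in \cvv_{1/N}$) to both $\phi$ and $\phi^{-1}$: this gives $N\delta\,\Lambda_A(\psi) \le \lambda_A(\psi) \le \Lambda_A(\psi)$ for every $\psi \in \Out(F_N)$, so that $\log \lambda_A(\psi)$ and $d_L(T_A, T_A\psi) = \log \Lambda_A(\psi)$ differ by an additive constant depending only on $N$. It therefore suffices to compare $d_L(T_A, T_A\phi)$ with $d_L(T_A, T_A\phi^{-1})$. By $\Out(F_N)$-invariance of $d_L$, the second quantity equals $d_L(T_A\phi, T_A)$, and both $T_A$ and $T_A\phi$ lie in $\cvv_{1/N}$.

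Second, I invoke the quasi-symmetry of $d_L$ on the $\epsilon$-thick part of Outer space (due to Handel--Mosher and Algom-Kfir--Bestvina): there exist constants $K = K(N) \ge 1$ and $K' = K'(N) \ge 0$ such that for all $T, S \in \cvv_{1/N}$, $d_L(S, T) \le K\,d_L(T, S) + K'$. Applying this (and its mirror inequality) with $(T, S) = (T_A, T_A\phi)$ and combining with the first step yields the additive-plus-multiplicative estimate
\[
\tfrac{1}{K}\,\log \lambda_A(\phi) - C \le \log \lambda_A(\phi^{-1}) \le K\,\log \lambda_A(\phi) + C, \tag{$\star$}
\]
for some $C = C(N)$.

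Finally, I upgrade $(\star)$ to the pure multiplicative form by a case split. If $\log \lambda_A(\phi) \ge 2KC$, the additive error is dominated by $\tfrac{1}{2}\log\lambda_A(\phi)$, giving $\log \lambda_A(\phi^{-1}) \le (K + \tfrac{1}{2})\log \lambda_A(\phi)$, with the symmetric lower bound handled analogously. If $\log \lambda_A(\phi) < 2KC$, then by $(\star)$ also $\log \lambda_A(\phi^{-1})$ is bounded, so the sum $d_L(T_A, T_A\phi) + d_L(T_A\phi, T_A)$ is bounded by a constant depending only on $N$. Proper discontinuity of the $\Out(F_N)$-action on the thick part of Outer space, together with the finiteness of the stabilizer of $T_A$ in $\Out(F_N)$ (the group of signed permutations of the basis $A$, which preserves the uniform current $\nu_A = \mu_{T_A}$), then forces only finitely many values of the pair $(\lambda_A(\phi), \lambda_A(\phi^{-1}))$ to arise in this small regime; I choose $D = D(N)$ large enough to handle these finitely many cases. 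The hard part will be this last step: the additive error in $(\star)$ is genuine, and the pure multiplicative conclusion forces, in particular, the non-obvious implication $\lambda_A(\phi) = 1 \Rightarrow \lambda_A(\phi^{-1}) = 1$, which the proper-discontinuity-plus-finite-case argument must ultimately establish.
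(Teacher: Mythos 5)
Your overall architecture mirrors the paper's: relate $\log\lambda_A(\psi)$ to $d_L(T_A, T_A\psi)$ via the thick-part estimate (your Corollary~\ref{cor:gen1}, the paper's Theorem~\ref{cor:gen2'}), invoke the Algom-Kfir--Bestvina quasi-symmetry of $d_L$ on the thick part of $\cvv$, and then upgrade the resulting multiplicative-plus-additive inequality $(\star)$ to the pure multiplicative form. The difference — and the gap — lies in the upgrade.

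Your proposed upgrade step does not close. You split into the ``large'' regime, where the additive constant is absorbed, and the ``small'' regime, where you argue via proper discontinuity and finiteness of the stabilizer of $T_A$ that only finitely many pairs $(\lambda_A(\phi),\lambda_A(\phi^{-1}))$ occur, and then pick $D$ to cover them. But this finite-list argument only succeeds if no pair on the list has $\lambda_A(\phi)=1$ and $\lambda_A(\phi^{-1})>1$; if such a pair existed, no finite $D$ would work, and nothing in the finiteness argument rules it out. You flag exactly this as ``the hard part'' — correctly — but proper discontinuity alone cannot supply the missing implication $\lambda_A(\phi)=1\Rightarrow\lambda_A(\phi^{-1})=1$. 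The needed input is structural, not metric: one must know that $\lambda_A(\phi)=1$ if and only if $T_A\phi=T_A$, equivalently that $\phi$ is a permutational (signed-permutation) automorphism of $A$, which is a manifestly symmetric condition in $\phi\leftrightarrow\phi^{-1}$. This is a result of Kaimanovich--Kapovich--Schupp~\cite{KKS}, and it is precisely what the paper invokes. The paper also uses a second KKS fact, the explicit spectral gap $\min\{\lambda_A(\phi):\phi \text{ non-permutational}\}=1+\frac{2N-3}{2N^2-N}$, together with discreteness of $\Omega_N=\{\lambda_A(\phi)\}$ in $[1,\infty)$, to absorb the additive constant in $(\star)$ in the non-permutational case; your large/small dichotomy is a workable substitute for that piece, but the permutational characterization is indispensable and missing from your argument. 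Add the KKS equivalence $\lambda_A(\phi)=1\Leftrightarrow T_A\phi=T_A$ to handle the $\lambda_A(\phi)=1$ case separately, and the rest of your proof goes through.
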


Let $\phi\in \Out(F_N)$. Recall that the 
\emph{algebraic stretching factor} $\lambda(\phi)$ 
%
%
is defined as
\[
\lambda(\phi):=\sup_{w\in F_N, w\ne 1}\lim_{n\to\infty} \sqrt[n]{||\phi^n(w)||_S}\]
where $S\in \cvn$ is an arbitrary base point. It is known that the
limit in the last 
equality always exists,  
that this
definition of $\lambda(\phi)$ does not depend on the choice of $S\in\cvn$, and that we always have $\lambda(\phi)\ge 1$.   An element $\phi\in\Out(F_N)$ is called \emph{exponentially growing} if $\lambda(\phi)>1$, and \emph{polynomially growing} if $\lambda(\phi)=1$.   Indeed, it 
is known (see for example \cite{Levitt}), that $\phi$ is polynomially growing if and only if for every $w\in F_N$ and $S\in\cvn$ the sequence $||\phi^n(w)||_S$ is bounded above by a polynomial in $n$. 

The algebraic stretching factor $\lambda(\phi)$ can be read-off from any relative train-track representative $f: \Gamma \to \Gamma$ of $\phi$ 
as the maximum of the Perron-Frobenius eigenvalues for any of the canonical irreducible diagonal blocks of the (non-negative) transition matrix $M(f)$.

As another application of the results of this paper, we explain how the generic stretching factor
$\lambda_A(\phi^n)$ grows in terms of $n$ for an arbitrary
$\phi\in \Out(F_N)$. Thus we obtain (c.f. Theorem~\ref{thm:p'} below) the following result, which answers Problem~9.2 posed in \cite{KKS}:

\begin{thm}\label{thm:p}
Let $A$ be a free basis of $F_N$ and let $\phi\in\Out(F_N)$ and let $
\lambda(\phi)$ be the algebraic stretching factor of $\phi$. Then there exist constants $c_1,c_2>0$ and an integer $m\ge 0$ such that for every $n\ge 1$ we have
\[
c_1 \, \lambda(\phi)^n \, n^m\le \lambda_A(\phi^n) \le c_2  \, \lambda(\phi)^n \, n^m.
\]
Moreover, if $\phi$ admits an expanding train-track representative with
an irreducible transition matrix (e.g. if $\phi$ is fully
irreducible), then $m=0$ and $\lambda(\phi)>1$.
\end{thm}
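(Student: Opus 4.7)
The plan is to reduce the growth of the generic stretching factor $\lambda_A(\phi^n)$ to the growth of the extremal stretching factor $\Lambda_A(\phi^n)$, and then to estimate the latter using relative train-track theory.

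Since the $\Out(F_N)$-action on $\cvv$ just pre-composes the $F_N$-action on a tree with an automorphism, the quotient metric graph $T_A\phi^n/F_N$ is isometric to $T_A/F_N$, namely the $N$-petal rose with petals of length $1/N$. Hence $T_A\phi^n\in\cvve$ uniformly in $n$ for $\epsilon=1/N$. Applying Corollary~\ref{cor:gen1} with $S=T_A\phi^n$ and using the identities $\lambda_A(\phi^n)=N\lambda_A(T_A\phi^n)$ and $\Lambda_A(\phi^n)=\Lambda(T_A,T_A\phi^n)$ from Definition~\ref{gen-stretch}, one obtains
\[
N\delta\,\Lambda_A(\phi^n)\le \lambda_A(\phi^n)\le \Lambda_A(\phi^n)
\]
with $\delta=\delta(N)>0$ independent of $n$. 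Thus it suffices to show that $\Lambda_A(\phi^n)=\Theta(\lambda(\phi)^n n^m)$ for some integer $m\ge 0$ with multiplicative constants independent of $n$, and to verify the ``moreover'' clause at this level.

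Next I would invoke the standard fact $\Lambda_A(\phi^n)=\max_{1\le||w||_A\le 2}||\phi^n(w)||_A/||w||_A$, so the question reduces to the asymptotics of $||\phi^n(w)||_A$ over a finite set of words $w$. Fix a relative train-track representative $f:\Gamma\to\Gamma$ of $\phi$, with block lower-triangular non-negative integer transition matrix $M(f)$ whose irreducible diagonal blocks have Perron--Frobenius eigenvalues $\lambda_1,\dots,\lambda_k$; by assumption $\lambda(\phi)=\max_i\lambda_i$. A standard Jordan-type analysis of powers of such a block matrix (as developed by Bestvina--Handel and Levitt) gives that the row-sums of $M(f)^n$ grow as $\Theta(n^{m}\lambda(\phi)^n)$, where $m$ is the maximal length, minus one, of a chain of exponential strata each having PF eigenvalue equal to $\lambda(\phi)$. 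Because a relative train-track map performs no illegal back-tracking on the exponential strata, $||\phi^n(w)||_A$ is comparable, up to a constant depending only on $w$ and $\phi$, to the edge-path length of $f^n$ applied to a tight loop representing $w$, which in turn is controlled by the appropriate entries of $M(f)^n$. Taking $m$ to be the maximum polynomial degree achieved over the finite set of test words of $A$-length at most $2$ then yields $\Lambda_A(\phi^n)=\Theta(\lambda(\phi)^n n^m)$, and combined with the first step, $c_1\lambda(\phi)^n n^m\le\lambda_A(\phi^n)\le c_2\lambda(\phi)^n n^m$.

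For the ``moreover'' clause, if $\phi$ admits an expanding train-track representative $f$ with irreducible transition matrix, then $M(f)$ has a unique Perron--Frobenius eigenvalue $\lambda(\phi)>1$ with a one-dimensional top Jordan block, so by Perron--Frobenius every row-sum of $M(f)^n$ is $\Theta(\lambda(\phi)^n)$ without polynomial factors; and since $f$ is a train-track map, $||\phi^n(w)||_A$ is literally the edge-path length of $f^n$ on a loop representing $w$, with no cancellation on iteration. Hence $m=0$ in this case. The main obstacle in the argument is the bookkeeping required in the general block-triangular case to pin down the exponent $m$ uniformly in $n$, both as a lower and as an upper bound: the lower bound requires exhibiting a word of length $\le 2$ whose iterates actually see all nested strata realizing the maximal polynomial growth, while the upper bound requires controlling accumulated cancellations across different strata; both points are handled by the detailed asymptotic analysis of iterates of non-negative block matrices and by the no-cancellation property built into the definition of a relative train-track.
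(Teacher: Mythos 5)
Your first step --- sandwiching $\lambda_A(\phi^n)$ between positive multiples of $\Lambda_A(\phi^n)$ via the uniform thickness of the $T_A$-orbit and Corollary~\ref{cor:gen1} (equivalently Theorem~\ref{cor:gen2'}) --- matches the paper's reduction exactly, so that part is sound.

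The gap is in the second step, where you estimate $\Lambda_A(\phi^n)$. You claim that because a relative train-track map ``performs no illegal back-tracking on the exponential strata,'' the reduced length $||\phi^n(w)||_A$ is comparable, up to a constant independent of $n$, to unreduced edge-path lengths read off from $M(f)^n$, so that a Jordan-type analysis of the block-triangular transition matrix directly yields the $\lambda(\phi)^n n^m$ asymptotics. This is not correct: the relative train-track condition controls legality only within each exponential stratum, and the reduced image $[f^n(\gamma)]$ of a loop can suffer cancellation that accumulates across strata as $n$ grows, so the reduced cyclic length is not comparable to a row-sum of $M(f)^n$ with constants independent of $n$. The assertion that $||\phi^n(w)||$ grows like $(\lambda')^n n^{m'}$ for some $(\lambda',m')$ drawn from a fixed finite list, together with identification of the top degree $m$, is precisely the content of Levitt's theorem (\cite[Theorem~6.2]{Levitt}), and the paper's Proposition~\ref{prop:power} invokes it for exactly this reason. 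The paper also first passes to a power $\alpha=\phi^q$ admitting an \emph{improved} relative train-track representative (\cite{BFH00}) before applying Levitt, and then interpolates via $n=qn_1+r$; your sketch omits this. Your handling of the ``moreover'' clause (for an expanding irreducible train-track representative there are no lower strata, so Perron--Frobenius gives $m=0$) is essentially fine, but in the general case the cancellation control is not a formality to be absorbed into a ``no-cancellation property built into the definition'' --- it is the theorem you need to cite or reprove.
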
 

The ``polynomial growth degree'' $m$ in this result is bounded above by the number of strata of any relative train track representative $f$ as above which have PF-eigenvalue equal to $\lambda$, and it has been determined precisely by Levitt in \cite{Levitt}, see the proof of Proposition \ref{prop:power} below.

\smallskip


{\bf Acknowledgements:}  We thank Matt Clay and Camille Horbez for useful discussions about Guirardel's intersection number.  We are also grateful to Brian Ray and Paul Schupp for conducting helpful computer experiments  with generic stretching factors of free group automorphisms.

\section{Preliminaries}

\subsection{Basic terminology and notations related to Outer space}
\label{basic-outer}

We denote by $\cvn$ the unprojectivized Outer space, that is the space of all free discrete minimal isometric actions of $F_N$ on $\mathbb R$-trees, considered up to $F_N$-equivariant isometry.
Denote by $\cvnbar$ the closure of $\cvn$ in the equivariant Gromov-Hausdorff convergence topology (or, equivalently, in the hyperbolic length functions topology). It is known~\cite{BF93,CL,Gui98} that $\cvnbar$ consists of all the \emph{very small} non-trivial minimal isometric actions of $F_N$ on $\mathbb R$-trees, again considered up to $F_N$-equivariant isometry.
Recall that a point $T\in\cvnbar$ is uniquely determined by its \emph{translation length function} $||\cdot||_T:F_N\to [0,\infty)$, where for $w\in F_N$ we have $||w||_T=\inf_{x\in T} d(x,wx)=\min_{x\in T} d(x,wx)$.

The space $\cvnbar$ has a natural right $\Out(F_N)$-action, where for $w\in F_N$ and $T\in\cvnbar$ we have $||w||_{T\phi}=||\phi(w)||_T$. 
It is sometimes useful to convert this action 
to a left $\Out(F_N)$-action by setting $\phi T:=T\phi^{-1}$. 
Denote $\cvv:=\{T\in\cvn \mid vol(T/F_N)=1\}$ and refer to $\cvv$ as the \emph{volume-normalized Outer space} or just \emph{normalized Outer space}.
Then $\cvn$ is  an open dense $\Out(F_N)$-invariant subset of $\cvnbar$, and $\cvv$ is a closed $\Out(F_N)$-invariant subset of $\cvn$ (but of course $\cvv$ is not closed in $\cvnbar$). 

There is a natural action of $\mathbb R_{>0}$ on $\cvn$ and $\cvnbar$ by scalar multiplication, which yields the corresponding \emph{projectivizations} $\CVN=\cvn/\mathbb R_{>0}$ and $\CVNbar=\cvnbar/\mathbb R_{>0}$.  For a tree $T\in\cvnbar$ we denote its projective class in $\CVNbar$ by $[T]$. Thus $[T]=\{cT \mid c>0\}$.
Note that $\CVN$ is canonically $\Out(F_N)$ equivariantly homeomorphic to $\cvv$, but it is still important to remember that technically $\CVN$ and $\cvv$ are distinct objects.

For $\epsilon>0$ we denote by $\cvve$ the set of all  $T\in\cvv$ such that 
the shortest non-trivial immersed circuit in the metric graph $T/F_N$ has length
$\ge \epsilon$. Equivalently, $\cvve$ is the set of all $T\in\cvv$ such that for every $w\in F_N\smallsetminus\{1\}$ we have $||w||_T\ge \epsilon$. For every $\epsilon>0$ the set $\cvve \subseteq \cvv$ is a closed $\Out(F_N)$-invariant subspace, and the quotient $\cvve/\Out(F_N)$ is compact.

A 
\emph{chart} 
on $F_N$ is an isomorphism $\alpha: F_N\to\pi_1(\Gamma,p)$ where $\Gamma$ is a finite connected graph with all vertices of degree $\ge 3$ and where $p$ is a base vertex in $\Gamma$ (which is usually suppressed).  Every such $\alpha$ defines an open cone in $\cvn$ consisting of assigning arbitrary positive lengths to edges of $\Gamma$ and then lifting this assignment to the universal cover $\widetilde \Gamma$ to get an element $T\in\cvn$. The intersection of such an open cone with $\cvv$ is an open simplex 
$\Delta$ 
in $\cvv$ of dimension $m-1$, where $m$ is the number of 
unoriented 
edges of $\Gamma$. Every point $T\in \cvn$ belongs to a unique open cone of this form, and every point of $\cvv$ belongs to a unique 
such open simplex $\Delta$.

The space $\CVNbar$ is known to be compact and finite-dimensional.

\subsection{Asymmetric Lipschitz distance}

For points $T\in\cvn$ and $S\in\cvnbar$ denote 
\[
\Lambda(T,S)=\sup_{w\in F_N\smallsetminus\{1\}} \frac{||w||_S}{||w||_T}.
\]

If $T,S\in\cvv$, we also denote $d_L(T,S):=\log \Lambda(T,S)$. As noted in the Introduction, for $T,S\in \cvv$, the quantity $d_L(T,S)$ is often called the \emph{asymmetric Lipschitz distance} from $T$ to $S$.

\begin{rem}\label{rem:candidates}
If  if  $T\in\cvn$ and $S\in\cvnbar$ then
$0<\Lambda(T,S)<\infty$. Moreover, it is known~\cite{FM11,Wh} that for any open simplex 
$\Delta \subset \cvn^1$ as in subsection \ref{basic-outer}
there exists a
finite subset $C_\Delta\subseteq F_N\smallsetminus\{1\}$ such that for every
$T\in\Delta$ and every
$S\in\cvnbar$ we have \[\Lambda(T,S)=\max_{w\in C_\Delta}
\frac{||w||_S}{||w||_T}.\]
The set $C_\Delta$ can be chosen to be contained in the subset of all elements which are represented by paths that cross at most twice over every non-oriented edge of $\Gamma = T/\FN$, for $T \in \Delta$.

Note also that from the definition we see that for every
$T\in\cvn$, $S\in\cvnbar$ and $\phi\in \Out(F_N)$ one has
$\Lambda(T,S)=\Lambda(\phi T,\phi S)$.
\end{rem}

\subsection{Geodesic currents}\label{Sect:curr}

We refer the reader to \cite{Ka2,KL1,KL2,KL3} for detailed background on geodesic currents, and we only recall a few basic definitions and facts here.
Let $\partial^2 F_N=\partial F_N\times \partial F_N\smallsetminus diag$, and endow $\partial^2 F_N$ with the subspace topology and with the diagonal $F_N$-action by translations.
A \emph{geodesic current} on $F_N$ is a positive Borel measure $\mu$ on $\partial^2 F_N$ such that $\mu$ is finite on compact subsets, $F_N$-invariant and ``flip''-invariant (where the ``flip'' map $\partial^2 F_N\to\partial^2 F_N$ interchanges the two coordinates). The space of all geodesic currents on $F_N$ is denoted $\Curr(F_N)$. The space $\Curr(F_N)$ comes equipped with a natural weak*-topology and a natural left $\Out(F_N)$-action by affine homeomorphisms. 

Let $\alpha: F_N\to\pi_1(\Gamma,p)$ be a chart on $F_N$, and consider $\widetilde \Gamma$ with the simplicial metric where every edge has length 1. Then there is a natural $F_N$-equivariant quasi-isometry (given for any point $p\in \widetilde\Gamma$ by the orbit map $F_N\to \widetilde\Gamma$, $g\mapsto gp$) 
between $F_N$ and $\widetilde \Gamma$, which induces a canonical $F_N$-equivariant homeomorphism between $\partial F_N$ and $\partial \widetilde \Gamma$.
We will therefore identify $\partial F_N$ with $\partial \widetilde \Gamma$ using this homeomorphism 
without invoking it explicitly, whenever it is convenient.

A non-degenerate geodesic segment $\gamma$ is $\widetilde\Gamma$ defines a \emph{cylinder set} $Cyl_\alpha(\gamma)$ consisting of all $(X,Y)\in \partial^2 F_N$ such that the geodesic from $X$ to $Y$ in $\widetilde\Gamma$ passes through $\gamma$ (in the correct direction). The sets $Cyl_\alpha(\gamma)$, as $\gamma$ varies among all non degenerate geodesic edge-paths in $\widetilde\Gamma$, are compact and open, and form a basis for the topology on $\partial^2 F_N$. Note that for $w\in F_N$ we have $Cyl_\alpha(w\gamma)=wCyl_\alpha(\gamma)$.  If $\mu\in \Curr(F_N)$ and $v$ is a non-degenerate reduced edge-path in $\Gamma$, we define the \emph{weight} $\langle v,\mu\rangle_\alpha:=\mu(Cyl_\alpha(\gamma))$ where $\gamma$ is any lift of $v$. Since the measure $\mu$ is $F_N$-invariant, this definition does not depend on the specific choice of the lift $\gamma$ of $v$ to $\widetilde \Gamma$.  A current $\mu$ is uniquely determined by its collection of weights with respect to a given chart. Moreover, if $\mu_n,\mu\in \Curr(F_N)$ and $\alpha$ is a chart as above, then $\lim_{n\to\infty} \mu_n=\mu$ in $\Curr(F_N)$ if and only if for every non-degenerate reduced edge-path $v$ in $\Gamma$ we have $\lim_{n\to\infty} \langle v,\mu_n\rangle_\alpha=\langle v,\mu\rangle_\alpha$. 

For every $w\in F_N \smallsetminus \{1\}$  there is an associated \emph{counting current} $\eta_w\in \Curr(F_N)$, which depends only on the conjugacy class $[w]$ of $w$ in $F_N$ and satisfies $\eta_{w^{-1}}=\eta_w$ and $\eta_{w^n}=n\,\eta_w$ for all integers $n\ge 1$, and such that $\phi\,\eta_w=\eta_{\phi(w)}$ for all $\phi\in\Out(F_N)$, $w\in F_N\smallsetminus \{ 1\}$. 
The precise definition of $\eta_w$ is not important at the moment, but we will recall some of its basic properties later, as necessary.
The set $\{c\,\eta_w\mid c>0, w\in F_N, w\ne 1\}$ of the so-called \emph{rational currents} is dense in $\Curr(F_N)$.

Be aware that in general for a representative (even a train-track representative) $f: \Gamma \to \Gamma$ of $\phi$ one has $\langle v, \phi\mu\rangle_\alpha \neq \langle [f(v)],\mu\rangle_\alpha$, where $[f(v)]$ denotes the edge-path obtained from $f(v)$ by reduction (= iterative contraction of any backtracking path).

\subsection{Intersection form}

In~\cite{KL2} Kapovich and Lustig proved the existence of a continuous \emph{geometric intersection form} between points of $\cvnbar$ and geodesic currents:

\begin{prop}\cite{KL2}\label{prop:iform} 
There exists a unique continuos function $\langle\cdot ,\cdot \rangle :
\cvnbar
\times \Curr(F_N)\to [0,\infty)$, 
called the  \emph{geometric intersection form}, with the following properties:
\begin{enumerate}
\item For any $\mu_1,\mu_2\in \Curr(F_N)$, $T\in\cvnbar$, $c_1,c_2\ge 0$ and $r>0$ we have:
\[
\langle r T, c_1\mu_1+c_2\mu_2\rangle=rc_1\langle T,\mu_1\rangle+rc_2\langle T,\mu_2\rangle
\]
\item For any $T\in
\cvnbar
$, $\mu\in\Curr(F_N)$ and $\phi\in\Out(F_N)$ we have:
\[
\langle \phi T, \phi \mu\rangle=\langle T,\mu\rangle
\]
\item For any $T\in\cvnbar$ and 
$w\in F_N \smallsetminus \{1\}$ 
we have:
\[
\langle T,\eta_w\rangle=||w||_T
\]
\item 
For any $T\in\cvn$ (with the associated chart $\alpha: \FN \to 
\pi_1(T/\FN)
$) and any $\mu\in\Curr(F_N)$ we have:
\[
\langle T, \mu\rangle= \sum_{e  \in {\rm Edges}(T/\FN)}\frac{1}{2}\langle e;\mu\rangle_\alpha
\]
where the summation is taken over all oriented edges of the graph $T/\FN$. 
\end{enumerate} 
\end{prop}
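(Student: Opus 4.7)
The plan is to prove the proposition in three stages: (a) establish uniqueness from density of rational currents; (b) construct $\langle\cdot,\cdot\rangle$ on $\cvn\times\Curr(\FN)$ via the explicit edge-sum formula of property (4); and (c) show this form extends continuously to $\cvnbar\times\Curr(\FN)$ and verify all four properties on the extension.

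Uniqueness is essentially free. For any fixed $T\in\cvnbar$, properties (1) and (3) force $\langle T,c\eta_w\rangle=c\|w\|_T$ on every rational current $c\eta_w$. Since the rational currents are dense in $\Curr(\FN)$, continuity of $\langle T,\cdot\rangle$ determines it uniquely. For the construction on $\cvn$, fix $T\in\cvn$ with $\Gamma=T/\FN$, associated chart $\alpha$, and edge length function $\ell_T$, and set
\[
\langle T,\mu\rangle \;:=\; \tfrac12\sum_{e}\ell_T(e)\,\langle e;\mu\rangle_\alpha,
\]
the sum running over oriented edges of $\Gamma$. Linearity (1) is immediate from additivity of the measure. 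Evaluating on $\eta_w$ recovers $\|w\|_T$ because $\langle e;\eta_w\rangle_\alpha$ counts how often $e$ is traversed by the cyclically reduced loop representing $[w]$ in $\Gamma$, giving (3). Equivariance (2) follows by tracking how $\Out(\FN)$ acts by precomposition on the chart, and chart-independence is then forced by uniqueness applied to rational currents. Joint continuity on $\cvn\times\Curr(\FN)$ reduces, inside each open simplex of $\cvn$ with its canonical chart, to continuity of edge lengths in $T$ and of cylinder weights in $\mu$ (cylinders are clopen in $\partial^2\FN$, so their $\mu$-measures are weak-$*$ continuous).

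The principal obstacle is extending the form continuously across the boundary $\partial\cvn=\cvnbar\smsm\cvn$, where the combinatorial edge formula has no direct meaning (boundary trees may have dense $\FN$-orbits). My approach is to show that, for each fixed $\mu$, the function $T\mapsto\langle T,\mu\rangle$ on $\cvn$ extends continuously to $\cvnbar$. For rational currents this is automatic: $T\mapsto\|w\|_T$ is continuous on $\cvnbar$ by definition of the length-function topology. For general $\mu$, approximate by a sequence of rational currents $\mu_n\to\mu$ and prove that $T\mapsto\langle T,\mu_n\rangle$ converges uniformly on compact subsets of $\cvnbar$. The decisive ingredient is a uniform bound of the form $\langle T,\mu\rangle\le C_K\cdot\mu(\Omega_R)$ valid for $T$ in a compact set $K\subseteq\cvnbar$, where $\Omega_R$ is a finite union of cylinders of fixed combinatorial radius in a reference chart. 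Such an estimate is obtained by comparing any $T\in\cvnbar$ to a fixed reference $T_0\in\cvn$ via an $\FN$-equivariant Lipschitz map, whose Lipschitz constant is uniformly bounded on $K$ and which transfers the edge-sum control from $T_0$ to $T$. Once the extension exists, properties (1)--(3) transfer to all of $\cvnbar\times\Curr(\FN)$ by continuity and density, while (4) holds on $\cvn$ by construction.
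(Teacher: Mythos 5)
Your overall architecture --- uniqueness from density of rational currents, the explicit edge-sum formula on $\cvn$ (correctly including the edge-length factors $\ell_T(e)$, without which property (3) would fail for trees with non-unit edge lengths), and then a continuous extension to the boundary --- is the same skeleton as the actual construction in \cite{KL2}; note that the present paper only cites that result and contains no proof of its own. Your uniqueness argument and the verification of (1)--(4) on $\cvn\times\Curr(\FN)$ are correct.

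The gap is in the extension step, which is precisely the hard content of the theorem. You propose to define $\langle T,\mu\rangle$ for $T\in\partial\cvn$ as $\lim_n\langle T,\mu_n\rangle$ for rational $\mu_n=c_n\eta_{w_n}\to\mu$, and to obtain uniform convergence on compacta from a bound $\langle T,\mu\rangle\le C_K\,\mu(\Omega_R)$ coming from a Lipschitz comparison with a reference tree $T_0$. This does not work, for two reasons. First, an upper bound is not a Cauchy or equicontinuity estimate: to show $(\langle T,\mu_n\rangle)_n$ is uniformly Cauchy you must control $\left|c_n\|w_n\|_T-c_m\|w_m\|_T\right|$, and the inequality $\|w\|_T\le\Lambda(T_0,T)\,\|w\|_{T_0}$ bounds each term separately but says nothing about their difference; moreover weak-$*$ convergence $\mu_n\to\mu$ gives no control on the total variation of $\mu_n-\mu_m$ on $\Omega_R$, which is what such a comparison would require. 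A uniformly bounded sequence of continuous functions on $\cvnbar$ converging on the dense subset $\cvn$ need not converge on $\partial\cvn$, and even if it does the limit need not be continuous. Second, the inequality $\langle T,\mu\rangle\le\Lambda(T_0,T)\langle T_0,\mu\rangle$ for general (non-rational) $\mu$ and boundary $T$ presupposes that $\langle T,\cdot\rangle$ is already defined and continuous there, so invoking it at this point is circular. In \cite{KL2} this step is carried out by a different mechanism: for $T\in\cvnbar$ one fixes a basepoint and uses the bounded back-tracking (BBT) property of very small $\FN$-actions to show that finite Riemann-sum approximations over cylinders of combinatorial length $n$ converge, with error terms controlled uniformly by the BBT constant; these uniform estimates are also what yield the \emph{joint} continuity in $(T,\mu)$ asserted by the proposition, which your argument addresses only as continuity in $T$ for each fixed $\mu$. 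Without some substitute for these estimates, the extension across $\partial\cvn$ is not established.
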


\section{Tree-current morphisms and extremal Lipschitz
  distortion}
\label{Lip-distortion}

Recall that a current $\mu\in \Curr(F_N)$ is called \emph{filling} if for every $S\in \cvnbar$ we have $\langle S,\mu\rangle >0$.

We proved in \cite{KL3} that for a current $\mu\in\Curr(F_N)$ and a tree $T\in\cvnbar$ we have $\langle T,\mu\rangle=0$ if and only if the support of $\mu$ is contained in the ``dual algebraic lamination'' of $T$ (in the sense of \cite{CHL2}). Using this fact, it was shown in \cite{KL3}  that if $\mu$ is a current with full support, then $\mu$ is filling. We denote by $\Curr_{fill}(F_N)$ the set of all filling $\mu\in \Curr(F_N)$, and endow  $\Curr_{fill}(F_N)$ with the subspace topology given by the inclusion  $\Curr_{fill}(F_N)\subseteq \Curr(F_N)$.

\begin{defn}[Tree-current morphism]
A \emph{tree-current morphism} is a continuous
function $J:\cvn^1\to \Curr(F_N)$ such that for every $T\in \cvn^1$ and
every $\phi\in\Out(F_N)$ we have $J(\phi T)=\phi\ J(T)$.

A \emph{filling tree-current morphism} is a tree-current morphism $J:\cvn^1\to \Curr(F_N)$  such that for every $T\in\cvv$ the current $J(T)\in\Curr(F_N)$ is filling.
\end{defn}

\begin{lem}\label{lem:contin}
The function $\cvv\times \cvnbar\to \mathbb R$, $(T,S)\mapsto \Lambda(T,S)$, is continuous.
\end{lem}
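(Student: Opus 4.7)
The plan is to establish continuity at an arbitrary fixed point $(T_0, S_0) \in \cvv \times \cvnbar$. The starting observation is that, by the defining property of the equivariant length-function topology on $\cvnbar$, for each $w \in F_N\setminus\{1\}$ the functions $T \mapsto ||w||_T$ and $S \mapsto ||w||_S$ are continuous. Since $T_0 \in \cvv$ acts freely on its underlying tree, $||w||_{T_0}>0$ for every $w \neq 1$, so the ratio $(T,S)\mapsto ||w||_S/||w||_T$ is continuous at $(T_0,S_0)$, and a maximum of finitely many such ratios is likewise continuous on a neighborhood of $(T_0,S_0)$. The task is therefore to produce a \emph{single} finite subset of $F_N\setminus\{1\}$ that computes $\Lambda(T,S)$ uniformly on a whole neighborhood of $T_0$ in $\cvv$, going beyond Remark~\ref{rem:candidates}, which only furnishes such a set on a given open simplex.

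I would first invoke the local finiteness of the simplicial decomposition of $\cvv$: only finitely many open simplices $\Delta_1=\Delta_0, \Delta_2,\ldots,\Delta_k$ contain $T_0$ in their closure. This follows from the dimension bound $\dim \cvv = 3N-4$ together with the observation that the simplices containing $T_0$ in their closure correspond to marked graphs refining the underlying marked graph of $T_0$ via admissible vertex splittings, of which there are only finitely many given the edge-count bound forced by the dimension. Consequently, $T_0$ admits an open neighborhood $U\subseteq \cvv$ such that every $T\in U$ lies in some $\Delta_i$. Set $C := C_{\Delta_1}\cup\cdots\cup C_{\Delta_k}$, a finite subset of $F_N\setminus\{1\}$. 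For $T\in U\cap \Delta_i$ and any $S\in\cvnbar$, Remark~\ref{rem:candidates} yields
$$\Lambda(T,S)\;=\;\max_{w\in C_{\Delta_i}}\frac{||w||_S}{||w||_T}\;\le\;\max_{w\in C}\frac{||w||_S}{||w||_T}\;\le\;\Lambda(T,S),$$
where the final inequality holds because $C$ is a finite subset of $F_N\setminus\{1\}$ while $\Lambda(T,S)$ is the supremum of the same ratio over all of $F_N\setminus\{1\}$. Thus $\Lambda(T,S)=\max_{w\in C}||w||_S/||w||_T$ throughout $U\times\cvnbar$, and this expression is continuous by the opening paragraph.

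The main technical point requiring care is the local-finiteness assertion at $T_0$; this is a standard feature of Outer space, but it is precisely what permits the passage from the per-simplex statement of Remark~\ref{rem:candidates} to a single candidate set valid on a neighborhood. Once this is in hand, joint continuity of $\Lambda$ on $\cvv\times\cvnbar$ is a matter of noting that, on such a neighborhood, the defining supremum agrees with a maximum over the fixed finite set $C$.
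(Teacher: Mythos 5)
Your proof is correct and takes essentially the same approach as the paper: both take the finitely many open simplices whose closure contains the given point, union the associated candidate sets from Remark~\ref{rem:candidates} to get a single finite set computing $\Lambda(T,S)$ on a neighborhood, and conclude continuity as a finite maximum of continuous ratios. The extra detail you give on local finiteness is left implicit in the paper but does not change the argument.
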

\begin{proof}
Let $T\in \cvv$ be arbitrary. 

Let $\Delta_1,\dots, \Delta_m$ be all the open simplicies in $\cvv$
whose closures in $\cvv$ contain $T$. 

Set
$C_T=\cup_{i=1}^mC_{\Delta_i}$. Note that $U=\Delta_1\cup \dots
\cup\Delta_m$ is a neighborhood of $T$ in $\cvv$.

Thus for every $T'\in U$ and every $S\in \cvnbar$ we have
\[
\Lambda(T',S)=\max_{w\in C_T} \frac{||w||_S}{||w||_{T'}}.
\]
Therefore the function $\Lambda(T',S)$ is continuous on $U\times \cvnbar$. Since $T\in\cvv$ was arbitrary,
the conclusion of the lemma follows.
\end{proof}

Let $J$ be a filling tree-current morphism. Then for any $S\in \cvnbar$ and $c>0$ we have $\frac{\langle S, J(T)\rangle}{\Lambda(T,S)}=\frac{\langle cS, J(T)\rangle}{\Lambda(T,cS)}$.
Note also that since $J(T)$ is a filling current, for every $S\in\cvnbar$ we have  $\langle S, J(T)\rangle >0$. Therefore we have a well defined function
\[
f:\cvv\times \CVNbar\to (0,\infty)
\]
given by $f(T,[S])=\frac{\langle S, J(T)\rangle}{\Lambda(T,S)}$, where
$T\in \cvv$ and $S\in\cvnbar$.

\begin{lem}\label{lem:cont}
Let $J$ be a filling tree-current morphism. Then the function 
\[
f:\cvv\times \CVNbar\to (0,\infty), \quad (T,S)\mapsto \frac{\langle S, J(T)\rangle}{\Lambda(T,S)}
\]
is continuous.

\end{lem}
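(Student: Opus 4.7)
The plan is to lift the question to the un-projectivized product $\cvv \times \cvnbar$. Define the auxiliary function $\tilde f : \cvv \times \cvnbar \to (0,\infty)$ by
\[
\tilde f(T, S) \;:=\; \frac{\langle S, J(T)\rangle}{\Lambda(T, S)}.
\]
First I would verify that $\tilde f$ takes strictly positive, finite values: for $T \in \cvv \subseteq \cvn$ and $S \in \cvnbar$, Remark~\ref{rem:candidates} gives $0 < \Lambda(T,S) < \infty$, while the hypothesis that $J$ is a filling tree-current morphism forces $\langle S, J(T)\rangle > 0$.

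Next I would show that $\tilde f$ is jointly continuous on $\cvv \times \cvnbar$. The numerator is continuous because the tree-current morphism $J : \cvv \to \Curr(\FN)$ is continuous by definition, and the intersection form $\langle \cdot, \cdot \rangle : \cvnbar \times \Curr(\FN) \to \R_{\ge 0}$ is jointly continuous by Proposition~\ref{prop:iform}; the denominator is continuous by Lemma~\ref{lem:contin}. Nonvanishing of the denominator, established above, then makes the ratio continuous.

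The third step is to verify that $\tilde f$ descends through projectivization. For every $c > 0$ the definitions give $\langle cS, J(T)\rangle = c\,\langle S, J(T)\rangle$ (from linearity in Proposition~\ref{prop:iform}(1)) and $\Lambda(T, cS) = c\,\Lambda(T,S)$, so $\tilde f(T, cS) = \tilde f(T, S)$. Thus $\tilde f$ is constant along $\R_{>0}$-orbits in the second variable and factors uniquely as $\tilde f = f \circ (\id_{\cvv} \times \pi)$, where $\pi : \cvnbar \to \CVNbar$ is the projectivization and $f$ is the function in the statement.

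Finally, to transfer continuity from $\tilde f$ to $f$, I would use that the $\R_{>0}$-action on $\cvnbar$ is by homeomorphisms, so $\pi$ is an open surjection; consequently $\id_{\cvv} \times \pi$ is also an open surjection and hence a quotient map. The universal property of quotient maps then promotes continuity of $\tilde f$ to continuity of $f$. I do not foresee any substantial obstacle: the only delicate point is ensuring that the product map $\id \times \pi$ is genuinely a quotient map, which the openness argument handles cleanly and sidesteps any need for an ad hoc lifting-of-convergent-sequences argument in $\CVNbar$.
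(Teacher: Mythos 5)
Your proof is correct and follows essentially the same route as the paper: the paper establishes the scale-invariance $\langle cS, J(T)\rangle/\Lambda(T,cS)=\langle S, J(T)\rangle/\Lambda(T,S)$ and the positivity of the numerator in the paragraph immediately preceding the lemma, and then deduces continuity from Lemma~\ref{lem:contin}, the continuity of $J$, and the continuity of the intersection form $\langle\cdot,\cdot\rangle$. You are more explicit than the paper about why continuity on $\cvv\times\cvnbar$ descends to $\cvv\times\CVNbar$ (the open-quotient-map argument for $\id\times\pi$), but that is just spelling out a step the paper leaves implicit.
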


\begin{proof}

The conclusion of the lemma follows directly from Lemma~\ref{lem:contin} together with the continuity of the 
the geometric intersection form $\langle \cdot ,
\cdot \rangle$. 
\end{proof}

\begin{cor}\label{cor:minmax}
Let $K\subseteq \cvv$ be a compact
subset, and let
$J:\cvv\to \Curr_{fill}(F_N)$ be a filling tree-current morphism.

Then there exist $\delta_1=\delta_1(K,J)>0$ and $\delta_2=\delta_2(K,J)>0$ such that for every $T\in K$ and every
$S\in\cvnbar$ we have $\delta_1\le f(K,[S])\le \delta_2$.
\end{cor}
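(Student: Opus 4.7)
The plan is a direct compactness argument building on Lemma~\ref{lem:cont}. The key observation is that although the continuous function $f$ is a priori defined on $\cvv \times \CVNbar$, the bounds are sought over the smaller domain $K \times \CVNbar$, and I want to arrange that this latter domain is compact so that $f$ attains positive minimum and maximum there.

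First I would note that $K \times \CVNbar$ is compact: $K$ is compact by hypothesis, and $\CVNbar$ is compact by the standard fact about the Thurston compactification of Outer space recalled at the end of Section~\ref{basic-outer}. Next, by Lemma~\ref{lem:cont}, the function
\[
f:\cvv\times \CVNbar\to (0,\infty), \quad (T,[S])\mapsto \frac{\langle S, J(T)\rangle}{\Lambda(T,S)}
\]
is continuous; in particular its restriction to the compact subspace $K\times\CVNbar$ is continuous.

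Consequently, this restriction attains its minimum and its maximum on $K\times\CVNbar$. Since $f$ takes values in the open interval $(0,\infty)$ — which requires both that $J(T)$ is filling so $\langle S,J(T)\rangle>0$ for all $S\in\cvnbar$, and that $\Lambda(T,S)$ is finite and positive for $T\in\cvn$, $S\in\cvnbar$ (Remark~\ref{rem:candidates}) — the minimum and maximum are strictly positive real numbers. Setting $\delta_1$ to be the minimum and $\delta_2$ to be the maximum gives the required bounds.

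There is essentially no obstacle here: all the nontrivial content has already been absorbed into Lemma~\ref{lem:cont} (which in turn rests on Lemma~\ref{lem:contin} and the continuity of the intersection form in Proposition~\ref{prop:iform}) and into the compactness of $\CVNbar$. The only small point to be careful about is that $f$ genuinely descends to $\cvv \times \CVNbar$ (not merely $\cvv \times \cvnbar$), which was already verified just before Lemma~\ref{lem:cont} by the scaling identity $\langle cS, J(T)\rangle/\Lambda(T,cS)=\langle S, J(T)\rangle/\Lambda(T,S)$ for $c>0$.
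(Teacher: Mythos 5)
Your proof is correct and is essentially the same as the paper's: both invoke Lemma~\ref{lem:cont} for continuity of $f$, use compactness of $K\times\CVNbar$, and extract the strictly positive minimum and maximum as $\delta_1,\delta_2$. Your additional remarks on why $f$ is positive-valued and well-defined on the projectivization merely make explicit what the paper leaves implicit.
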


\begin{proof}
The set $K\times \CVNbar$ is a compact Hausdorff space and, by Lemma~\ref{lem:cont}, $f:K\times
\CVNbar\to (0,\infty)$ is a continuous function. Therefore $f$ achieves a
positive minimum $\delta_1$ and a positive maximum $\delta_2$ on $K\times \CVNbar$, and the conclusion of the
corollary follows.
\end{proof}

\begin{cor}\label{cor:main}
Let $K\subseteq \cvv$ be a compact
subset, let $\mathcal T_K=\cup_{\phi\in \Out(F_N)} \phi K$ and let
$J:\cvv\to \Curr(F_N)$ be a filling tree-current morphism.

Let furthermore $\delta_1=\delta_1(K,J)>0$ 
and 
$\delta_2=\delta_2(K,J)>0$ be the
constants provided by Corollary~\ref{cor:minmax}.

Then for every $T\in \mathcal T_K$ and every $[S]\in\CVNbar$ we have
\[
0< \delta_1 \le \frac{\langle S, J(T)\rangle}{\Lambda(T,S)} \le \delta_2<\infty.
\]

\end{cor}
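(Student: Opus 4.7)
The plan is to reduce the claim on the $\Out(F_N)$-orbit $\mathcal T_K$ to the already-established claim on the compact piece $K$ (Corollary~\ref{cor:minmax}) via an equivariance argument; this is really the only new content of the corollary, since all the analytic work (continuity of $\Lambda$, continuity of the intersection form, compactness of $\CVNbar$) has been carried out in the earlier lemmas.

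First, given any $T\in\mathcal T_K$, I would write $T=\phi T_0$ for some $\phi\in\Out(F_N)$ and $T_0\in K$. Then I would observe that the ratio defining $f$ is built out of three $\Out(F_N)$-equivariant ingredients: $J$ is a tree-current morphism, so $J(\phi T_0)=\phi\,J(T_0)$; the intersection form satisfies $\langle \psi T',\psi\mu\rangle=\langle T',\mu\rangle$ by Proposition~\ref{prop:iform}(2); and the extremal distortion satisfies $\Lambda(\psi T',\psi S')=\Lambda(T',S')$ by Remark~\ref{rem:candidates}. Applying these identities to $T=\phi T_0$ and an arbitrary $S\in\cvnbar$ yields
\[
\langle S,J(T)\rangle=\langle S,\phi\,J(T_0)\rangle=\langle \phi^{-1}S,J(T_0)\rangle
\]
and
\[
\Lambda(T,S)=\Lambda(\phi T_0,S)=\Lambda(T_0,\phi^{-1}S).
\]

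Dividing, and using that both $\langle\cdot,J(T_0)\rangle$ and $\Lambda(T_0,\cdot)$ are positively homogeneous of the same degree on $\cvnbar$, so that their ratio descends to a function on $\CVNbar$ (exactly as noted right before Corollary~\ref{cor:minmax}), I would conclude
\[
\frac{\langle S,J(T)\rangle}{\Lambda(T,S)}=\frac{\langle \phi^{-1}S,J(T_0)\rangle}{\Lambda(T_0,\phi^{-1}S)}=f(T_0,[\phi^{-1}S]).
\]
Since $T_0\in K$ and $[\phi^{-1}S]\in\CVNbar$, the already proved Corollary~\ref{cor:minmax} applied to the pair $(T_0,[\phi^{-1}S])$ then gives $\delta_1\le f(T_0,[\phi^{-1}S])\le \delta_2$, which is the desired inequality.

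I do not expect any genuine obstacle: the argument is a clean equivariance-plus-compactness propagation from the ``fundamental piece'' $K$ to its $\Out(F_N)$-saturation $\mathcal T_K$. The only thing to be careful about is bookkeeping with the $\Out(F_N)$-action (left versus right, on trees versus on currents) and verifying the homogeneity needed to pass from $S\in\cvnbar$ to $[S]\in\CVNbar$, both of which have been set up explicitly in Section~\ref{basic-outer} and in Proposition~\ref{prop:iform}(1).
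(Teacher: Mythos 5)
Your proof is correct and follows essentially the same route as the paper: decompose $T=\phi T_0$ with $T_0\in K$, push $S$ back via $\phi^{-1}$, use equivariance of $J$, $\langle\cdot,\cdot\rangle$ and $\Lambda$ to identify the ratio with $f(T_0,[\phi^{-1}S])$, and apply Corollary~\ref{cor:minmax}. The paper makes the same move, writing $S'=\phi^{-1}S$ and collapsing the three equivariance identities into one line.
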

\begin{proof}
Let  $T\in \mathcal T_K$ and $[S]\in\CVNbar$ be arbitrary.

Then there exist $T'\in K$ and $\phi\in\Out(F_N)$ such that $T=\phi T'$.  By $\phi$-equivariance of $J$ we have $J(T)=\phi J(T')$.
Denote $S'=\phi^{-1}S$, so that $\phi S'=S$. Then
\begin{gather*}
\frac{\langle S, J(T)\rangle}{\Lambda(T,S)}=\frac{\langle \phi S', \phi J(T')\rangle}{\Lambda(\phi T',\phi S')}  = \frac{\langle S', J(T')\rangle}{\Lambda(T', S')}=f(T',[S'])\in [\delta_1.\delta_2], 
\end{gather*}
where the last inclusion holds 
by 
Corollary~\ref{cor:minmax} since $T'\in K$.
\end{proof}

Note that Corollary~\ref{cor:main} does not require the tree-current morphism $J:\cvv\to \Curr_{fill}(F_N)$ to be injective, although in the specific applications of interest to us $J$ will be injective.

\section{Patterson-Sullivan currents and extremal Lipschitz distortion}\label{Sect:PS}

\subsection{Volume entropy and the Patterson-Sullivan currents}

We only give here a brief summary of basic definitions and facts regarding Patterson-Sullivan currents for points of $\cvn$.
We refer the reader to~\cite{Fur,Coor,Kaim,KN} for more detailed background information about Patterson-Sullivan measures and Patterson-Sullivan currents in the context of word-hyperbolic groups and Gromov-hyperbolic spaces.

Let $T\in\cvn$, where $N\ge 2$.  Since $F_N$ and $T$ are $F_N$-equivariantly quasi-isometric, there is a natural identification of $\partial F_N$ and $\partial T$, which we will use later on.

The \emph{volume entropy} $h(T)$ of $T$ is defined as
\[
h(T) 
:=\lim_{R\to\infty} \frac{\log(\#\{w\in F_N \mid d_T(p,wp)\le R\})}{R}
\]
where $p\in T$ is an arbitrary base point. It is known that the above definition does not depend on the choice of a base-point $p\in T$ and that we have $h(T)>0$ for every $T\in\cvn$.
It is also known that $h(T)$ is exactly the critical exponent of the \emph{Poincare series} 
\[
\Pi_p(s)=\sum_{w\in F_N} e^{-s d_T(p,wp)}.
\]
In other words,
$\Pi_p(s)$ converges for all $s>h(T)$ and diverges for all $s\le h(T)$.
It is also known that as $s\to h+$, any weak limit $\nu$ of the measures
\[
\frac{1}{\Pi_p(s)} \sum_{w\in F_N} e^{-s d_T(p,wp)} {\rm Dirac}(wp)
\]
is a probability measure supported on $\partial T=\partial F_N$.
Any such $\nu$ is called a \emph{Patterson-Sullivan} measure on $\partial F_N$ corresponding to $T$, and the measure class of $\nu$ is canonically determined by $T$.
As follows from general results of Furman~\cite{Fur}, in this case there exists a unique, up to a scalar multiple, geodesic current $\mu$ in the measure class of $\nu\times\nu$ on $\partial^2 F_N$.
We call the unique scalar multiple $\mu_T$ of $\mu$ such that $\langle T, \mu_T\rangle=1$, the \emph{Patterson-Sullivan current} for $T\in\cvn$. One also has that the current $\mu_T$ has full support (this follows, for example, both from the general results of Furman~\cite{Fur} and from the explicit  formulas for $\mu_T$ obtained in \cite{KN}). 

\begin{prop}\label{prop:KN}
The map 
$$J_{PS}: \cvv\to Curr(F_N), \, 
T \mapsto \mu_T$$ 
is a filling tree-current morphism.
\end{prop}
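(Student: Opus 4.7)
My plan is to decompose the statement into the three defining conditions of a filling tree-current morphism — well-definedness, $\Out(F_N)$-equivariance plus continuity, and the filling property — and verify each in turn, appealing to the existing results cited in the introduction rather than reconstructing the Patterson--Sullivan theory from scratch.

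First I would note that $J_{PS}$ is well-defined on $\cvv$: for any $T\in\cvv$, the measure class of $\nu\times\nu$ on $\partial^2 F_N$ (for a Patterson--Sullivan measure $\nu$) contains, by Furman's theorem~\cite{Fur}, a unique (up to scaling) $F_N$- and flip-invariant current, and since this current will turn out to be filling we have $\langle T,\mu\rangle>0$, so the normalization $\langle T,\mu_T\rangle = 1$ picks out a unique representative. (Strictly, to make this clean I would first establish the filling property and only then invoke the normalization.)

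For continuity and $\Out(F_N)$-equivariance of $T\mapsto \mu_T$, the plan is simply to quote the main result of Kapovich--Nagnibeda~\cite{KN}, which is already cited verbatim in the introduction: the map $J_{PS}:\cvv\to\Curr(F_N)$, $T\mapsto\mu_T$, is a continuous $\Out(F_N)$-equivariant embedding. The equivariance in particular can be checked by hand from the construction — the Patterson--Sullivan measure class pushes forward naturally under an isometric conjugation $T\mapsto\phi T$, and the intersection form identity $\langle \phi T,\phi\mu_T\rangle=\langle T,\mu_T\rangle=1$ forces the normalized representative to transform correctly — but for the purposes of this proposition there is no need to redo that.

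The main substantive point is the filling property. Here I would reason as follows. It is known (both from the general hyperbolic-group setup in Furman~\cite{Fur} and from the explicit formulas for $\mu_T$ obtained in Kapovich--Nagnibeda~\cite{KN}) that the Patterson--Sullivan current $\mu_T$ has full support in $\partial^2 F_N$ for every $T\in\cvn$. This is the genuinely analytic input: the PS construction spreads mass over the entire boundary because the Poincaré series diverges at the critical exponent and the resulting measure charges every cylinder. Then I would invoke the result of Kapovich--Lustig~\cite{KL3}, recalled in the paragraph preceding the definition of tree-current morphism, that any geodesic current with full support is filling, i.e.\ $\langle S,\mu_T\rangle>0$ for every $S\in\cvnbar$. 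Combining these two steps gives the filling condition and completes the verification. The only ``obstacle'' is really bookkeeping: making sure that the normalization is compatible with continuity, which follows from the fact that $\langle T,\mu\rangle$ is a jointly continuous, strictly positive quantity on the relevant domain, so dividing through preserves continuity.
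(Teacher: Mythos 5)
Your proposal is correct and follows essentially the same route as the paper: quote Kapovich--Nagnibeda~\cite{KN} for continuity and $\Out(F_N)$-equivariance of $T\mapsto\mu_T$, note that $\mu_T$ has full support, and then invoke the Kapovich--Lustig result~\cite{KL3} that full-support currents are filling. The extra remarks on well-definedness and normalization are harmless elaborations of steps the paper treats as already established.
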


\begin{proof}
Since $\mu_T$ has full support, by a result of Kapovich and Lustig~\cite[Corollary 1.3]{KL3},  it follows that 
$\mu_T\in \Curr_{fill}(F_N)$.  The fact that $J_{PS}$ is a continuous $\Out(F_N)$-equivariant map was proved by Kapovich and Nagnibeda~\cite{KN}.
Thus $J_{PS}$ is indeed a filling tree-current morphism, as claimed.
\end{proof}

The fact that for $T\in\cvv$ the Patterson-Sullivan current $\mu_T$ is filling, 
i.e. that $\langle S,\mu_T\rangle\ne 0$ for every $S\in\cvnbar$, is quite non-trivial and does not follow directly from Proposition~\ref{prop:iform}. This fact, which requires a general result from \cite{KL3} characterizing the case where $\langle S,\mu\rangle=0$ (where $S\in\cvnbar$ and $\mu\in\Curr(F_N)$), is, in a sense, the place where the real ``magic'' 
in the proofs of the main results of the present paper happens.

We now obtain Theorem~\ref{thm:A} from the Introduction:

\begin{thm}\label{thm:PS}
Let  $N\ge 2$ and $\epsilon>0$. Then there exist constants $\delta_2\ge \delta_2>0$ 
such that for
every $T\in \cvve$, $S\in\cvnbar$ we have:
\[
\delta_1\le \frac{\langle S,\mu_T\rangle}{\Lambda(T,S)}\le \delta_2
\]
Therefore there exists a constant $c > 0$ such that for
every $T\in \cvve$ and $S\in\cvv$ we have:
\[
\left| \log \langle S,\mu_T\rangle  - d_L(T,S)\right|\le c.
\]
\end{thm}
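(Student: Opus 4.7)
The plan is to deduce Theorem~\ref{thm:PS} directly from Corollary~\ref{cor:main} applied to the filling tree-current morphism $J_{PS}: \cvv \to \Curr(F_N)$, $T \mapsto \mu_T$, furnished by Proposition~\ref{prop:KN}. What is missing is a compact set $K \subseteq \cvv$ such that $\cvve \subseteq \mathcal{T}_K = \bigcup_{\phi \in \Out(F_N)} \phi K$, so that the hypotheses of Corollary~\ref{cor:main} apply with every $T \in \cvve$ as an admissible input.

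To produce such a $K$, I would invoke the fact recalled in Section~\ref{basic-outer} that the quotient $\cvve / \Out(F_N)$ is compact. Since $\Out(F_N)$ acts on $\cvn$ by homeomorphisms, the quotient map $\pi: \cvve \to \cvve/\Out(F_N)$ is open and continuous, and $\cvve$ is locally compact as a subset of $\cvn$. Hence one may cover the compact quotient by finitely many images of relatively compact open sets in $\cvve$ and set $K$ to be the (compact) union of their closures, which gives $\pi(K) = \cvve/\Out(F_N)$, equivalently $\cvve \subseteq \Out(F_N) \cdot K = \mathcal{T}_K$.

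With such a $K \subseteq \cvv$ in hand, Corollary~\ref{cor:main} provides constants $0 < \delta_1 \le \delta_2$ (depending only on $K$, hence only on $N$ and $\epsilon$) so that
\[
\delta_1 \le \frac{\langle S, \mu_T\rangle}{\Lambda(T,S)} \le \delta_2
\]
for every $T \in \mathcal{T}_K$ and every $[S] \in \CVNbar$. Since the displayed ratio is manifestly invariant under scaling $S$ by positive reals (because $\langle rS,\mu_T\rangle = r\langle S,\mu_T\rangle$ and $\Lambda(T,rS) = r\Lambda(T,S)$), the inequalities hold for every $S \in \cvnbar$. This is the first assertion. The second assertion is immediate: if in addition $S \in \cvv$ then $d_L(T,S) = \log \Lambda(T,S)$, and taking logarithms yields $|\log \langle S,\mu_T\rangle - d_L(T,S)| \le c$ with $c := \max\{|\log \delta_1|, |\log \delta_2|\}$.

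I do not expect any substantial obstacle, since the hard work has been packaged into the earlier results: Corollary~\ref{cor:main} handles the $\Out(F_N)$-equivariant compact-to-orbit extension via the equivariance of both $\Lambda$ and $J$, while Proposition~\ref{prop:KN}, together with the characterization result from~\cite{KL3}, supplies the crucial non-trivial fact that $\mu_T$ is filling for every $T \in \cvv$. The only step requiring care is the construction of $K$, which is a routine application of local compactness of $\cvv$ and compactness of the quotient $\cvve/\Out(F_N)$.
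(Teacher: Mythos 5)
Your proposal is correct and follows essentially the same route as the paper: produce a compact $K\subseteq\cvve$ with $\cvve=\bigcup_{\phi\in\Out(F_N)}\phi K$ (the paper cites compactness of $\cvve/\Out(F_N)$ and proper discontinuity of the action for this), then apply Corollary~\ref{cor:main} to the filling tree-current morphism $J_{PS}$ from Proposition~\ref{prop:KN} and take logarithms. The only cosmetic difference is that you unpack the construction of $K$ via local compactness rather than citing it directly.
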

\begin{proof}
Since $\cvve/\Out(F_N)$ is compact and the action of $\Out(F_N)$ on $\cvve$ is properly discontinuous,  there exists a compact subset $K\subseteq \cvve$ such that $\cvve=\mathcal T_K=\cup_{\phi\in\Out(F_N)} \phi K$.
By Proposition~\ref{prop:KN}, the map $J_{PS}: \cvv\to Curr(F_N)$
is a filling tree-current morphism.
The conclusion of the theorem now follows from Corollary~\ref{cor:main}. 
\end{proof}

\subsection{Uniform currents and generic stretching factors}

Kapovich and Nagnibeda also provide reasonably explicit description of $\mu_T$ in terms of its weights on the ``cylinder subsets'' of $\partial^2 F_N$.
The details of that description are not immediately relevant for the present paper. However, in the case where $T\in\cvv$ and where $T/F_N$ is a regular metric graph (that is, a regular graph where all edges have the same length), one can give a more precise description of $\mu_T$ as a ``uniform current'' corresponding to $T$ and relate $\mu_T$ to the exit measure of the simple non-backtracking random walk on $T$. We briefly recall here the description of uniform currents for the standard $N$-roses, that is for points of $\cvv$ corresponding to free bases of $F_N$.

Let $A=\{a_1,\dots, a_N\}$ be a free basis of $F_N$. Let $R_N$ be the graph given by a wedge of $N$ loop-edges $e_1,\dots, e_N$ at a vertex $x_0$. By identifying $e_i$ with $a_i\in F_N$ we get an identification of $\alpha_A: F_N \overset{\cong}{\longrightarrow} \pi_1(R_N,x_0)$, that is, a chart
on $F_N$. We give each edge of $R_N$ length $1/N$, so that $R_N$ becomes a metric graph of volume $1$. Then the universal cover $T_A:=\widetilde R_N$ is an $\mathbb R$-tree, which can be thought of as the Cayley graph of $F_N$ with respect to $A$, but where all edges have length $1/N$. The group $F_N$ has a natural 
free and discrete isometric 
left 
action on $T_A$ by covering transformations, with $T_A/F_N=R_N$. Thus $T_A$ is a point of $\cvv$.

The \emph{uniform current} $\nu_A$ on $F_N$ corresponding to $A$ is defined explicitly by its weights. Namely, for every non-trivial freely reduced word $v$ over $A^{\pm 1}$ we have
\[
\langle v, \nu_A\rangle_{\alpha_A}= \frac{1}{N(2N-1)^{|v|-1}}.
\]
One can check that this assignment of weights does define a geodesic current and that $\langle T_A, \nu_A\rangle=1$.
Moreover, in this case we also have:

\begin{prop}\label{prop:nu_A}
Let $N\ge 2$ and let $A$ be a free basis of $F_N$. Then $\mu_{T_A}=\nu_A$, that is, the Patterson-Sullivan current corresponding to $T_A$ is exactly the uniform current $\nu_A$.
\end{prop}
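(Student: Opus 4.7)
The plan is to show that $\mu_{T_A}$ and $\nu_A$ assign identical weights to the basic cylinder sets $Cyl_{\alpha_A}(v)$ indexed by finite reduced words $v$ over $A^{\pm 1}$. Since both $\mu_{T_A}$ and $\nu_A$ are $F_N$-invariant and flip-invariant positive Borel measures on $\partial^2 F_N$, and such cylinders form a basis of compact-open sets, agreement on the $\langle v,\cdot\rangle_{\alpha_A}$ weights will suffice. I will identify $\partial F_N$ with $\partial T_A$ and regard boundary points as infinite reduced words in $A^{\pm 1}$ based at the vertex $x_0$.

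First I would pin down the Patterson-Sullivan measure $\nu$ on $\partial F_N$ associated to $T_A$. Since $T_A$ is the $(2N)$-regular tree with all edges of length $1/N$, counting reduced words over $A^{\pm 1}$ gives $h(T_A) = N\log(2N-1)$. For every $s > h(T_A)$ the finite measure $\frac{1}{\Pi_{x_0}(s)} \sum_{w \in F_N} e^{-sd_{T_A}(x_0, wx_0)} \delta_{wx_0}$ is invariant under the stabilizer of $x_0$ in $\Aut(T_A)$; as this stabilizer acts transitively on the oriented edges emanating from any vertex of $T_A$, any weak-$*$ limit as $s \to h(T_A)^+$ is forced to be the unique Bernoulli probability measure $\nu$ on $\partial F_N$ satisfying $\nu(C(v)) = \frac{1}{2N(2N-1)^{n-1}}$ for every finite reduced word $v$ of length $n \ge 1$, where $C(v) \subset \partial F_N$ denotes the cylinder of infinite reduced words beginning with $v$.

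Next I would compute $\mu_{T_A}(Cyl_{\alpha_A}(v))$ using the Furman--Coornaert description of $\mu_{T_A}$ recalled in \cite{KN}: in the measure class of $\nu \times \nu$ on $\partial^2 F_N$, the Radon--Nikodym density of $\mu_{T_A}$ depends, up to a single universal positive constant, only on the Gromov product $(\xi|\eta)_{x_0}$ in $T_A$. The key observation is that for the canonical lift $\tilde v$ of $v$ starting at $x_0$, every pair $(\xi,\eta) \in Cyl_{\alpha_A}(\tilde v)$ has its connecting geodesic pass through $x_0$, so $(\xi|\eta)_{x_0} = 0$ identically on this cylinder. Consequently $\mu_{T_A}(Cyl_{\alpha_A}(v))$ is a universal positive multiple of $(\nu \times \nu)(Cyl_{\alpha_A}(v))$, with the \emph{same} multiplicative constant for every $v$. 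An elementary count gives $(\nu \times \nu)(Cyl_{\alpha_A}(v)) = \tfrac{2N-1}{2N}\cdot\tfrac{1}{2N(2N-1)^{n-1}}$ when $|v|=n \ge 1$. Comparing with the defining weights of $\nu_A$ shows $\mu_{T_A}$ and $\nu_A$ agree on all cylinders up to a single overall scalar, so $\mu_{T_A} = c\,\nu_A$ for some $c>0$; the normalizations $\langle T_A,\mu_{T_A}\rangle = 1 = \langle T_A,\nu_A\rangle$ then force $c=1$.

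The main obstacle will be justifying the explicit Radon--Nikodym formula for $\mu_{T_A}$ in the measure class of $\nu \times \nu$ and verifying that the universal constant really is the same across all $v$; this is precisely the content of the analysis in \cite{KN} (ultimately going back to \cite{Fur}), and once it is invoked the remaining work reduces to the Gromov-product-zero observation and an elementary count in the regular tree $T_A$.
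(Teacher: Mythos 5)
Your proof is correct and follows essentially the same route as the paper's second argument: identify the Patterson--Sullivan measure for $T_A$ as the uniform (Bernoulli/visibility) measure on $\partial T_A$, pass to the associated current via Furman's description, and pin down the scalar by the normalization $\langle T_A,\cdot\rangle=1$. You fill in some details the paper leaves to the references \cite{Coor,Ly,Fur} (the derivation of the Bernoulli measure from stabilizer-invariance and the Gromov-product computation on cylinders through $x_0$), but the underlying argument is the same.
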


The above fact is not explicitly stated in \cite{KN} but it easily follows from the explicit formulas for the weights for Patterson-Sullivan currents obtained in \cite{KN}.
Alternatively, one knows, for example by the results of \cite{Coor,Ly} that for $T_A$ the uniform visibility measure $m_A$ on $\partial F_N=\partial T_A$ is a Patterson-Sullivan measure for $T_A$. Since $\nu_A\in \Curr(F_N)$ is in the measure class of $m_A\times m_A$ and since $\langle T_A,\nu_A\rangle=1$, it follows from the definition of the Patterson-Sullivan current that $\mu_{T_A}=\nu_A$.
Note that 
for any other $S\in\cvn$ the intersection number $\langle S,\nu_A\rangle$ measures the distortion of a ``long random geodesic'' in $T_A$ with respect to $S$.

Recall that in the Introduction, given a free basis $A$ of $F_N$, $S\in\cvnbar$ and $\phi\in\Out(F_N)$, we defined the generic stretching factors $\lambda_A(S)$ and $\lambda_A(\phi)$.

\begin{lem}\label{lem:1/N}
For any free basis $A$ of $F_N$ and any $S\in \cvnbar$
we have:
\[\lambda_A(S)\le 
\frac{1}{N} \Lambda(T_A,S).\]
\end{lem}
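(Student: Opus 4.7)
The plan is to unwind the definition of $\Lambda(T_A, S)$ directly along a generic trajectory of the non-backtracking random walk and then pass to the limit in $n$. The only real ingredients are the defining inequality coming from $\Lambda(T_A, S)$ (applied to a single carefully chosen sequence of elements of $F_N$), and the explicit comparison between translation length in $T_A$ and cyclically reduced word length over $A^{\pm 1}$ recorded in the paper.

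First I would use the fact, stated just before the lemma, that because every edge of $T_A/F_N = R_N$ has length $1/N$, we have $\|w\|_A = N\,\|w\|_{T_A}$ for every $w \in F_N \smsm \{1\}$, where $\|w\|_A$ denotes the cyclically reduced length of $w$ over $A^{\pm 1}$. Combining this with the definition $\Lambda(T_A, S) = \sup_{w \neq 1} \|w\|_S / \|w\|_{T_A}$ yields, for every $w \in F_N \smsm \{1\}$,
\[
\|w\|_S \;\le\; \Lambda(T_A, S)\,\|w\|_{T_A} \;=\; \frac{\Lambda(T_A, S)}{N}\,\|w\|_A.
\]

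Next I would specialize this to $w = y_1 y_2 \dots y_n$ for (almost) every trajectory of the simple non-backtracking random walk on $F_N$ with respect to $A$. Dividing by $n$ gives
\[
\frac{\|y_1\dots y_n\|_S}{n} \;\le\; \frac{\Lambda(T_A, S)}{N}\,\frac{\|y_1\dots y_n\|_A}{n}.
\]
By Proposition-Definition~\ref{defn:gen}, for a.e.\ such trajectory one has $\|y_1\dots y_n\|_A = n + o(n)$ and $\|y_1\dots y_n\|_S/n \to \lambda_A(S)$ as $n \to \infty$. Passing to the limit in the displayed inequality yields the desired bound $\lambda_A(S) \le \Lambda(T_A, S)/N$.

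I do not anticipate a genuine obstacle: the argument is essentially bookkeeping around the factor $1/N$ coming from the edge length in $T_A$, together with the almost-sure asymptotics of the word length along a non-backtracking trajectory. The only minor point to verify is that the argument remains valid for $S \in \cvnbar$ rather than just $S \in \cvn$; but $\Lambda(T_A, S)$ is finite on all of $\cvnbar$ (Remark~\ref{rem:candidates}) and $\|\cdot\|_S$ is a well-defined translation length function there, so the displayed inequality above is unaffected by passage to the boundary.
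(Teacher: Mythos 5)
Your proof is correct and takes essentially the same route as the paper's: both hinge on the identity $\|w\|_A = N\|w\|_{T_A}$, the definition of $\Lambda(T_A,S)$ as a supremum, and the almost-sure limit characterizing $\lambda_A(S)$ from Proposition-Definition~\ref{defn:gen}. The paper writes $\lambda_A(S)$ directly as the limit of $\frac{\|y_1\dots y_n\|_S}{N\|y_1\dots y_n\|_{T_A}}$ and bounds it by the supremum, whereas you bound each term first via the Lipschitz inequality and then pass to the limit; this is the same argument with the inequality applied at a slightly different point.
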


\begin{proof}

Since all edges in $T_A$ have length $1/N$, for every $w\in F_N$ we have  $||w||_A=N||w||_{T_A}$.
Then for a random trajectory $\xi=y_1y_2\dots y_n\dots$ of the simple non-backtracking random walk on $F_N$ with respect to $A$ we have:
\begin{gather*}
\lambda_A(S)=\lim_{n\to\infty} \frac{||y_1\dots y_n||_S}{||y_1\dots y_n||_A}=\lim_{n\to\infty} \frac{||y_1\dots y_n||_S}{
N||y_1\dots y_n||_{T_A}}=\\
\frac{1}{N}
\lim_{n\to\infty} \frac{||y_1\dots y_n||_S}{||y_1\dots y_n||_{T_A}
}\le 
\frac{1}{N}
\sup_{w\ne 1} \frac{||w||_S}{||w||_{T_A}}=
\frac{1}{N}
\Lambda(T_A,S).
\end{gather*}
\end{proof}

A key fact about generic stretching factors, originally established in \cite[Proposition 9.1]{Ka2} in slightly more limited context, is:

\begin{prop}\label{prop:random}
Let $A$ be a free basis of $F_N$ (where $N\ge 2)$ and let $S\in \cvnbar$.
Then
\[
\langle S, \nu_A\rangle=\lambda_A(S).
\]
\end{prop}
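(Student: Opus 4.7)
The plan is to realize $\nu_A$ as the weak$^*$-limit of rescaled counting currents associated with trajectories of the non-backtracking random walk, and then use the continuity of the geometric intersection form to transfer the identity from the right-hand side of Proposition~\ref{prop:iform}(3) to the desired identity.

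First, I fix a free basis $A$ of $F_N$ and consider a trajectory $\xi = y_1 y_2 \dots$ of the non-backtracking simple random walk, which by construction yields, for every $n$, a freely reduced word $w_n := y_1 \dots y_n$ over $A^{\pm 1}$. I would associate the rational current $\mu_n := \tfrac{1}{n}\eta_{w_n} \in \Curr(F_N)$ and aim to show that almost surely $\mu_n \to \nu_A$ in $\Curr(F_N)$. Since convergence in $\Curr(F_N)$ is characterized by convergence of the weights $\langle v, \cdot \rangle_{\alpha_A}$ on a basis of cylinder sets indexed by freely reduced words $v$ in $A^{\pm 1}$, it suffices to check that for every such $v$,
\[
\lim_{n\to\infty} \tfrac{1}{n}\langle v, \eta_{w_n}\rangle_{\alpha_A} = \langle v, \nu_A \rangle_{\alpha_A} = \tfrac{1}{N(2N-1)^{|v|-1}}.
\]
This is precisely where the ergodic-theoretic content enters: the non-backtracking walk is an irreducible aperiodic Markov chain on $A^{\pm 1}$ with uniform stationary distribution $1/(2N)$ on letters and transitions $1/(2N-1)$ for admissible pairs, so by the strong law of large numbers for Markov chains, the frequency of occurrences of $v$ as a subword of $w_n$ converges almost surely to $\tfrac{1}{2N(2N-1)^{|v|-1}}$. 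Since the weight $\langle v, \eta_{w_n}\rangle_{\alpha_A}$ counts both occurrences of $v$ and of $v^{-1}$, and differs from this combinatorial count by an $O(1)$ boundary error, dividing by $n$ and letting $n \to \infty$ yields the desired limit.

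Once the convergence $\mu_n \to \nu_A$ is established, the rest is a short formal argument. By Proposition~\ref{prop:iform}(3) we have $\langle S, \eta_{w_n}\rangle = \|w_n\|_S$, hence by linearity $\langle S, \mu_n \rangle = \|w_n\|_S / n$. The continuity of $\langle \cdot,\cdot\rangle$ on $\cvnbar \times \Curr(F_N)$, established in \cite{KL2} and quoted as Proposition~\ref{prop:iform}, applies in particular to $S \in \cvnbar$, so
\[
\langle S, \nu_A \rangle = \lim_{n\to\infty} \langle S, \mu_n\rangle = \lim_{n\to\infty} \frac{\|w_n\|_S}{n} = \lambda_A(S),
\]
where the last equality is the defining property of $\lambda_A(S)$ given in Proposition-Definition~\ref{defn:gen}.

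The main obstacle is the ergodic step: one has to ensure that the set of trajectories on which all countably many subword-frequency limits simultaneously achieve their correct values has full measure, and that the $O(1)$ endpoint discrepancy between the linear count in $w_n$ and the cyclic count underlying $\eta_{w_n}$ is genuinely negligible after division by $n$. Both points are standard once phrased correctly, and together they give a single full-measure event on which the weight-wise convergence $\mu_n \to \nu_A$ holds. The extension from the case $S \in \cvn$ treated in \cite[Proposition 9.1]{Ka2} to general $S \in \cvnbar$ is then automatic from the global continuity of the intersection form, with no further work needed.
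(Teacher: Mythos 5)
Your proof is correct and follows essentially the same route as the paper: establish the almost-sure weak$^*$ convergence $\tfrac{1}{n}\eta_{y_1\dots y_n}\to\nu_A$, then push this through the continuity of the intersection form from Proposition~\ref{prop:iform} together with $\langle S,\eta_{w_n}\rangle=\|w_n\|_S$. The only difference is that the paper simply cites \cite[Proposition 7.3]{Ka2} for the convergence of the rescaled counting currents, while you re-derive it via the law of large numbers for the non-backtracking Markov chain and the bookkeeping relating linear subword counts to the cyclic counts in $\eta_{w_n}$; that is precisely the argument underlying the cited result, so there is no substantive divergence.
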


\begin{proof}
By \cite[Proposition 7.3]{Ka2}, for a.e. trajectory $\xi=y_1y_2\dots y_n \dots$ of the simple non-backtracking random walk on $F_N$ with respect to $A$, we have 
\[
\lim_{n\to\infty} \frac{1}{n}\eta_{y_1\dots y_n} =\nu_A.
\]
Therefore, by Proposition~\ref{prop:iform}, for any $S\in\cvnbar$ we have
\[
\langle S, \nu_A\rangle=\lim_{n\to\infty} \frac{1}{n}\langle S, \eta_{y_1\dots y_n} \rangle=\lim_{n\to\infty} \frac{||y_1\dots y_n||_S}{n}=\lambda_A(S) 
\]
\end{proof}

\begin{rem}\label{rem:>0}
Since the current $\nu_A$ has full support and therefore $\nu_A$ is filling, Proposition~\ref{prop:random} implies that for every $S\in \cvnbar$ we have $\lambda_A(S)>0$. (From the definition of $\lambda_A(S)$ one only knows that $\lambda_A(S)\ge 0$ and it is not a priori obvious, that the case $\lambda_A(S)=0$ cannot occur.)
\end{rem}

We can now obtain Corollary~\ref{cor:gen1} from the Introduction:

\begin{thm}\label{thm:gen1'}
Let $N\ge 2$. Then there exists a constant $\delta=\delta(N)\in (0,1)$ with the following property:

For any free basis $A$ of $F_N$ and any $S\in \cvnbar$ we have
\[
0<\delta\le \frac{\lambda_A(S)}{\Lambda(T_A,S)}\le \frac{1}{N}.
\]

\end{thm}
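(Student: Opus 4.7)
The upper bound $\lambda_A(S)/\Lambda(T_A,S)\le 1/N$ is already established in Lemma~\ref{lem:1/N}, so the plan reduces to producing a uniform positive lower bound $\delta=\delta(N)>0$, independent of the choice of basis $A$ and of the tree $S\in\cvnbar$. The idea is to reinterpret $\lambda_A(S)$ as an intersection number and then apply Theorem~\ref{thm:PS}. Indeed, Proposition~\ref{prop:nu_A} identifies the Patterson-Sullivan current $\mu_{T_A}$ with the uniform current $\nu_A$, and Proposition~\ref{prop:random} gives $\langle S,\nu_A\rangle=\lambda_A(S)$. Combining these two identities yields
\[
\lambda_A(S)=\langle S,\mu_{T_A}\rangle
\]
for every $S\in\cvnbar$. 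This is the key input that allows us to bring the general machinery of Theorem~\ref{thm:PS} to bear on the generic stretching factor.

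The next step is to check that $T_A$ lies in the thick part $\cvve$ for a value of $\epsilon$ depending only on $N$. Since $T_A$ is the Cayley tree with all edges of length $1/N$, every non-trivial $w\in F_N$ has $\|w\|_{T_A}\ge 1/N$ (as its axis in $T_A$ traverses at least one edge). Hence $T_A\in\cvv_{1/N}$ for every free basis $A$ of $F_N$, with $\epsilon=1/N$ independent of $A$.

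With these two ingredients in hand, I would apply Theorem~\ref{thm:PS} with $\epsilon:=1/N$. That produces constants $0<\delta_1\le\delta_2$ depending only on $N$ such that, for every $T\in\cvv_{1/N}$ and every $S\in\cvnbar$,
\[
\delta_1\le \frac{\langle S,\mu_T\rangle}{\Lambda(T,S)}\le \delta_2.
\]
Specializing to $T=T_A$ and invoking the identity $\langle S,\mu_{T_A}\rangle=\lambda_A(S)$ established above gives
\[
\delta_1\le \frac{\lambda_A(S)}{\Lambda(T_A,S)}\le \delta_2.
\]
Setting $\delta:=\delta_1$ furnishes the required lower bound. The upper bound is already $1/N\le 1/2<1$ by Lemma~\ref{lem:1/N}, which forces $\delta_1\le 1/N<1$, so indeed $\delta\in(0,1)$, completing the proof.

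There is no real obstacle to this argument once Theorem~\ref{thm:PS} is available; the only point deserving care is the verification that the thickness parameter can be chosen depending only on $N$ (so that the constants from Theorem~\ref{thm:PS} apply uniformly across all free bases $A$), and the clean identification of $\lambda_A(S)$ with $\langle S,\mu_{T_A}\rangle$ via Propositions \ref{prop:nu_A} and \ref{prop:random}.
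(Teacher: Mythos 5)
Your proposal is correct and follows essentially the same route as the paper: the upper bound from Lemma~\ref{lem:1/N}, the identification $\lambda_A(S)=\langle S,\nu_A\rangle=\langle S,\mu_{T_A}\rangle$ via Propositions~\ref{prop:random} and~\ref{prop:nu_A}, the observation that $T_A$ lies in the $1/N$-thick part uniformly over all free bases $A$, and then Theorem~\ref{thm:PS} applied with $\epsilon=1/N$ for the lower bound. No meaningful differences.
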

\begin{proof}
Let $A$ be a free basis of $F_N$ and let $S\in\cvnbar$ be arbitrary. By Lemma~\ref{lem:1/N}, we have
 $\frac{\lambda_A(S)}{\Lambda(T_A,S)}\le \frac{1}{N}$.

Let $\delta=\delta_1(\epsilon,N)>0$ be the constant provided by Theorem~\ref{thm:PS}. By decreasing this constant if necessary, we can always assume that $0<\delta_1<1$. Note that the length of the shortest essential circuit in $T_A$ is equal to 
$1/N$.

Since $0<\epsilon\le 1/N$, it follows that that $T_A\in\cvve$.
Since $\mu_{T_A}=\nu_A$ and $\langle S, \nu_A\rangle=\lambda_A(S)$,
by Theorem~\ref{thm:PS} we have
\[
0<\delta_1\le \frac{\langle S, \mu_{T_A}\rangle}{\Lambda(T_A,S)}=\frac{\langle S, \nu_{A}\rangle}{\Lambda(T_A,S)}=\frac{\lambda_A(S)}{\Lambda(T_A,S)}\le \frac{1}{N},
\]
as required.
\end{proof}

\section{Extremal, generic and algebraic stretching factors for free group automorphisms}




We recall the notions of extremal and generic stretching factors from Definition~\ref{gen-stretch} in the Introduction:

\begin{defn}[Extremal and generic stretching factors of automorphisms]
\label{defn:gen-stretch}
Let $A$ be a free basis of $F_N$ and let $\phi\in\Out(F_N)$.

Denote
\[
\Lambda_A(\phi):=\Lambda(T_A, T_A\phi)=\sup_{w\ne 1} \frac{||\phi(w)||_A}{||w||_A}=e^{d_L(T_A,T_A\phi)}
\]
and refer to $\Lambda_A(\phi)$ as the \emph{extremal stretching factor} for $\phi$ with respect to $A$.

Also, denote 
$\lambda_A(\phi):=\lambda_A(NT_A\phi) = N\lambda_A(T_A\phi)$.

Thus for a.e.   trajectory $\xi=y_1\dots y_n\dots$ of the simple non-backtracking random walk on $F_N$ with respect to $A$ we have
\[
\lambda_A(\phi)=\lim_{n\to\infty} \frac{||\phi(y_1y_2\dots y_n)||_A}{n}=\lim_{n\to\infty} \frac{||\phi(y_1y_2\dots y_n)||_A}{||y_1y_2\dots y_n||_A}.
\]
We call $\lambda_A(\phi)$ the \emph{generic stretching factor} of $\phi$ with respect to $A$.
\end{defn}

First, we obtain, in a slightly restated form, Theorem~\ref{cor:gen2} from the Introduction:

\begin{thm}\label{cor:gen2'}
For every $N\ge 2$ there exists $0<\tau_N\le 1$ such that if $A$ is a free basis of $F_N$ and $\phi\in \Out(F_N)$ then
\[
0<\tau_N\le \frac{\lambda_A(\phi)}{\Lambda_A(\phi)} \le 1.
\]
\end{thm}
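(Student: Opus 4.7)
The plan is to derive Theorem~\ref{cor:gen2'} as a direct corollary of Theorem~\ref{thm:gen1'} by evaluating the latter at the specific point $S = T_A\phi$ and rescaling to match the conventions in Definition~\ref{defn:gen-stretch}. Since $T_A\phi \in \cvn \subseteq \cvnbar$ for every $\phi \in \Out(F_N)$, the tree $T_A\phi$ is a legitimate input for Theorem~\ref{thm:gen1'}, which yields constants $0 < \delta = \delta(N) < 1$ (depending only on $N$, not on $A$) satisfying
\[
\delta \le \frac{\lambda_A(T_A\phi)}{\Lambda(T_A, T_A\phi)} \le \frac{1}{N}.
\]

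First, I would unpack the two definitions. By Definition~\ref{defn:gen-stretch} we have $\Lambda_A(\phi) = \Lambda(T_A, T_A\phi)$ on the nose, and $\lambda_A(\phi) = N\lambda_A(T_A\phi)$. Multiplying the displayed inequality through by $N$ therefore gives
\[
N\delta \le \frac{N\lambda_A(T_A\phi)}{\Lambda(T_A, T_A\phi)} = \frac{\lambda_A(\phi)}{\Lambda_A(\phi)} \le 1.
\]

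Setting $\tau_N := N\delta(N)$, the upper bound of $1$ in Theorem~\ref{cor:gen2'} comes for free from Theorem~\ref{thm:gen1'}'s upper bound of $1/N$, and the lower bound $\tau_N > 0$ is inherited from $\delta(N) > 0$. The condition $\tau_N \le 1$ follows from $\delta \le 1/N$. Because the constant $\delta(N)$ in Theorem~\ref{thm:gen1'} is uniform over the choice of free basis $A$, the same $\tau_N$ works for all pairs $(A, \phi)$.

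There is essentially no obstacle here beyond bookkeeping: the substantive work (compactness of $\cvve/\Out(F_N)$, the filling property of $\mu_{T_A} = \nu_A$, and the identification $\langle S, \nu_A \rangle = \lambda_A(S)$) has already been absorbed into Theorem~\ref{thm:gen1'} via Proposition~\ref{prop:nu_A} and Proposition~\ref{prop:random}. The only point deserving a sentence of comment is the observation that $T_A \in \cvve$ for $\epsilon \le 1/N$ (since every essential circuit in $R_N = T_A/F_N$ traverses at least one loop-edge of length $1/N$), which is what allows Theorem~\ref{thm:gen1'} to apply to the Cayley tree $T_A$ regardless of the choice of basis $A$.
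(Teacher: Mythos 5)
Your proof is correct and follows essentially the same route as the paper: both specialize Theorem~\ref{thm:gen1'} to $S = T_A\phi$, observe $T_A \in \cvve$ with $\epsilon = 1/N$, and rescale by $N$ via the identities $\lambda_A(\phi) = N\lambda_A(T_A\phi)$ and $\Lambda_A(\phi) = \Lambda(T_A, T_A\phi)$. The only cosmetic difference is that the paper separately invokes Lemma~\ref{lem:1/N} for the upper bound $\lambda_A(\phi)/\Lambda_A(\phi) \le 1$, whereas you obtain it directly from the $\le 1/N$ side of Theorem~\ref{thm:gen1'}; these are the same fact since Theorem~\ref{thm:gen1'}'s upper bound was itself proved via Lemma~\ref{lem:1/N}.
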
 
\begin{proof}
Let $A$ be a free basis of $F_N$.  Recall that, by definition, for $\phi\in\Out(F_N)$ we have $\lambda_A(\phi)=N\lambda_A(T_A\phi)$ and $\Lambda_A(\phi)=\Lambda(T_A,T_A\phi)$. 
Therefore, by Lemma~\ref{lem:1/N}, we have  $\lambda_A(\phi)\le \Lambda_A(\phi)$, so that $ \frac{\lambda_A(\phi)}{\Lambda_A(\phi)} \le 1$. 
Since for any $\phi\in\Out(F_N)$ we have $T_A, T_A\phi\in\cvve$ with $\epsilon=1/N$, the statement of the theorem now follows directly from Theorem~\ref{thm:gen1'}.
\end{proof}

For two sequences $x_n>0, y_n>0$ (where $n\ge 1$) we say that $x_n$
\emph{grows like} $y_n$, if there exist
$0<c<c'<\infty$ such that for every $n\ge 1$ we have $c\le \frac{x_n}{y_n}\le c'$.

We now obtain Corollary~\ref{cor:gen3} from the Introduction:
\begin{cor}\label{cor:gen3'}
Let $N\ge 2$ and $F_N=F(a_1,\dots, a_n)$ with $A=\{a_1,\dots, a_N\}$.
There exists $D=D(N)\ge 1$ such that for every $\phi\in \Out(F_N)$ we
have
\[
\frac{1}{D} \log \lambda_A(\phi) \le \log \lambda_A(\phi^{-1})\le  D
\log \lambda_A(\phi) .
\]
\end{cor}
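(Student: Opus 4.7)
My plan is to deduce the corollary from two ingredients already available in the excerpt: Theorem~\ref{cor:gen2'}, which gives the two-sided comparison $\tau_N\Lambda_A(\phi)\le\lambda_A(\phi)\le\Lambda_A(\phi)$, and Horbez's quasi-isometric estimate $(\ddag)$, which on the thick part relates the asymmetric $d_L$ to the \emph{symmetric} combinatorial intersection number $i_c$. The conceptual point is that $i_c$ is symmetric while $d_L$ is not, so sandwiching $d_L$ through $\log i_c$ in both directions quantifies the asymmetry of $d_L$ on the $\Out(F_N)$-orbit of $T_A$. Taking logarithms in Theorem~\ref{cor:gen2'} shows $|\log\lambda_A(\phi)-\log\Lambda_A(\phi)|\le B_0$ with $B_0=\log(1/\tau_N)$. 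Using that the right $\Out(F_N)$-action on $\cvv$ is by $d_L$-isometries (one verifies $d_L(T\psi,S\psi)=d_L(T,S)$ directly from the definition of $\Lambda$), we get $\log\Lambda_A(\phi^{-1})=d_L(T_A,T_A\phi^{-1})=d_L(T_A\phi,T_A)$. Thus, up to the additive constant $B_0$, the corollary reduces to a quasi-symmetry estimate for $d_L$ between $T_A$ and $T_A\phi$, both of which lie in the $(1/N)$-thick part of $\cvv$.

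For that quasi-symmetry I would apply Horbez's estimate $(\ddag)$ to the pair $(T_A,T_A\phi)$ in both directions. Writing $L=\log i_c(T_A,T_A\phi)=\log i_c(T_A\phi,T_A)$ by symmetry of $i_c$, the two-sided inequality $(\ddag)$ gives both $L\le K_1\, d_L(T_A,T_A\phi)+K_1K_2$ and $d_L(T_A\phi,T_A)\le K_1 L+K_2$, hence
\[
d_L(T_A\phi,T_A)\le K_1^2\, d_L(T_A,T_A\phi)+(K_1^2+1)K_2,
\]
and symmetrically. Combining with the reduction in the previous paragraph produces constants $K=K(N),C=C(N)$ such that
\[
\log\lambda_A(\phi^{-1})\le K\log\lambda_A(\phi)+C\quad\text{and}\quad \log\lambda_A(\phi)\le K\log\lambda_A(\phi^{-1})+C.
\]

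The remaining step, which I expect to be the main obstacle, is upgrading this mixed additive-multiplicative bound to the purely multiplicative inequality of the corollary. The key observation is that the set $E_{L_0}:=\{\phi\in\Out(F_N)\,:\, d_L(T_A,T_A\phi)\le L_0\}$ is finite for every $L_0\ge 0$: the orbit $\Out(F_N)\cdot T_A$ lies entirely in the $(1/N)$-thick part, the $\Out(F_N)$-action on $\cvv$ is properly discontinuous with finite point-stabilizers, and Horbez's bound forces the $d_L$-forward ball of radius $L_0$ around $T_A$ to meet the thick part in a set of bounded diameter in the standard topology. For $L_0$ chosen large enough, any $\phi\notin E_{L_0}$ satisfies $\log\lambda_A(\phi)\ge L_0-B_0>0$, so the additive $C$ can be absorbed as $\log\lambda_A(\phi^{-1})\le\bigl(K+\tfrac{C}{L_0-B_0}\bigr)\log\lambda_A(\phi)$. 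For the finitely many $\phi\in E_{L_0}$, the inequality is verified by direct inspection; in particular, the boundary case $\lambda_A(\phi)=1$ occurs only when $\phi$ stabilizes $T_A$, i.e. when $\phi$ is a signed permutation of $A^{\pm 1}$ (so that $\phi^{-1}$ is also a signed permutation and $\lambda_A(\phi^{-1})=1$ as well, by direct inspection of the uniform current). Taking $D=D(N)$ to be the maximum of the absorbed multiplicative constant and the finitely many ratios arising from the exceptional cases then yields the corollary. Identifying the boundary locus $\{\lambda_A=1\}$ precisely and justifying the compact-closure property of $d_L$-forward balls on the thick part are the two places where I expect technical care will be needed.
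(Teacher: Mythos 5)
Your proposal is correct, but it takes a genuinely different route from the paper's in the key quasi-symmetry step. The paper invokes the Algom-Kfir--Bestvina theorem (\cite{AKB}, Theorem~24) directly, which already gives a multiplicative comparison $\frac{1}{D'}d_L(T_A,T_A\phi)\le d_L(T_A\phi,T_A)\le D'd_L(T_A,T_A\phi)$ for free; you instead synthesize quasi-symmetry of $d_L$ on the thick orbit of $T_A$ from Horbez's estimate $(\ddag)$, exploiting the symmetry of $i_c$. Both are valid, and the Horbez route is an interesting reduction of one quoted theorem to another, but it is more roundabout and (via the constant $K_1^2$ and the additive $(K_1^2+1)K_2$) produces a weaker constant. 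The second difference is in upgrading the additive-plus-multiplicative bound to the purely multiplicative one: the paper uses that $\Omega_N=\{\lambda_A(\phi)\}$ is discrete in $[1,\infty)$ with an \emph{explicit} gap, namely $\min\{\lambda_A(\phi):\phi\text{ non-permutational}\}=1+\tfrac{2N-3}{2N^2-N}$ from \cite{KKS}, which immediately gives the uniform positive lower bound on $\log\lambda_A(\phi)$ needed to absorb the additive term. Your route through finiteness of $E_{L_0}$ works but is more laborious (and the intermediate claim ``$d_L$-forward balls of the thick part have bounded diameter, hence $E_{L_0}$ is finite'' is asserted rather than proved; a shorter direct argument is available from $\Lambda(T_A,T_A\phi)\le e^{L_0}\Rightarrow\|\phi(a_i)\|_A\le e^{L_0}$, forcing finitely many possibilities for $\phi$ in $\Out(F_N)$). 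Finally, both you and the paper need the nontrivial \cite{KKS} fact that $\lambda_A(\phi)=1$ iff $\phi$ is permutational with respect to $A$; your phrase ``by direct inspection of the uniform current'' undersells this --- it is a genuine result, not an inspection --- so you should cite it rather than claim to reprove it.
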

\begin{proof}
It follows from a result of Algom-Kfir and Bestvina~\cite[Theorem
24]{AKB} that there exists $D'=D'(N)\ge 1$ such that for every $\phi\in \Out(F_N)$ we
have
\[
\frac{1}{D'} d_L(T_A, T_A\phi) \le d_L(T_A\phi,T_A)\le  D' d_L(T_A, T_A\phi).
\]
Note that $d_L(T_A, T_A\phi)=\log \Lambda(T_A,T_A\phi)=\log
\Lambda_A(\phi)$ and that \[d_L(T_A\phi,T_A)=d_L(T_A, T_A\phi^{-1})=\log
\Lambda(T_A,T_A\phi^{-1})=\log \Lambda_A(\phi^{-1}).\] 

Theorem~\ref{cor:gen2'}  now implies that there exists $D''=D''(N)\ge 1$ such that 
for every $\phi\in \Out(F_N)$ we
have
\[
\frac{1}{D''} \log \lambda_A(\phi) -D'' \le \log \lambda_A(\phi^{-1})\le  D''
\log \lambda_A(\phi) +D''. \tag{$\ast\ast$}
\]

It was proved in \cite{Fra,KL3} (and also follows from Theorem~\ref{cor:gen2'}) that the set $\Omega_N:=\{\lambda_A(\phi) \mid\phi\in \Out(F_N)\}$ is a discrete subset of $[1,\infty)$.
It was established in \cite{KKS} that for $\lambda_A(\phi)=1$ if and only if $\phi$ is a \emph{permutational automorphism} with respect to $A$, that is, if and only if, after a possible composition with an inner automorphism, $\phi$ is induced by a permutation of $A$, with possibly inverting some elements of $A$. Note that $\phi$ is permutational with respect to $A$ if and only if  $\phi^{-1}$ is  permutational with respect to $A$, so that for $\phi\in Out(F_N)$  $\lambda_A(\phi^{-1})=1$ if and only if $\lambda_A(\phi)=1$.  It was also proved in \cite{KKS} that the minimum of $\lambda_A(\phi)$, taken over all non-permutational $\phi$, is equal to $1+\frac{2N-3}{2N^2-N}$.  
Therefore $(\ast\ast)$ 
implies that there exists $D=D(N)\ge 1$ such that for every non-permutational $\phi\in\Out(F_N)$ we have
\[
\frac{1}{D} \log \lambda_A(\phi) \le \log \lambda_A(\phi^{-1})\le  D
\log \lambda_A(\phi)  \tag{$\clubsuit$}
\]
If $\phi$ is permutational, then so is $\phi^{-1}$. In this case we have $\log\lambda_A(\phi^{-1})=\log\lambda_A(\phi)=0$ and $(\clubsuit)$ holds as well.
Thus $(\clubsuit)$ holds for every $\phi\in\Out(F_N)$, which completes the proof.
\end{proof}

Recall that for $\phi\in\Out(F_N)$ the 
\emph{algebraic stretching factor} $\lambda(\phi)$ is defined as
\[
\lambda(\phi)=\sup_{w\in F_N, w\ne 1}\lim_{n\to\infty} \sqrt[n]{||\phi^n(w)||_S}\]
where $S\in \cvn$ is an arbitrary base-point.  As noted earlier, this definition of $\lambda(\phi)$ does not depend on the choice of $S\in\cvn$. 
The algebraic stretching factor $\lambda(\phi)$ can be read-off from any relative train-track representative $f: \Gamma \to \Gamma$ of $\phi$ as the maximum of the Perron-Frobenius eigenvalues for any of the canonical irreducible diagonal blocks of the (non-negative) transition matrix $M(f)$.

Corollary~\ref{cor:power} below describes, given $\phi\in \Out(F_N)$, the asymptotics of $\Lambda(S,S\phi^n)$ as $n$ tends to infinity (where $S\in\cvnbar$ is an arbitrary point, the choice of which does not affect this asymptotics).
The statement of Corollary~\ref{cor:power} is probably known to the experts. Since the proof is not yet available in the literature, and since we need Corollary~\ref{cor:power} 
for the applications in this paper, we include the proof here.

\begin{prop}\label{prop:power}
Let $\phi\in\Out(F_N)$.
\begin{enumerate}
\item Let $q\ge 1$ and let $\alpha=\phi^q$ admit an improved relative
  train-track (in the sense of \cite{BFH00}) representative
$f:\Gamma\to\Gamma$. Put $\lambda:=1$ if $\alpha$ is polynomially growing
(that is, if $f$ has no exponentially growing strata) and otherwise
let $\lambda>1$ be
the largest Perron-Frobenius eigenvalue of the exponentially growing
strata of $f:\Gamma\to\Gamma$.

Then there exist and integer $m\ge 0$ such that for every $S\in \cvn$ there are some constants
constants $0<C_1\le C_2<\infty$ such that for every $n\ge 1$
\[
C_1\lambda^{n/q} n^m\le \Lambda(S,S\phi^n) \le C_2 \lambda^{n/q} n^m.
\]

\item If $\phi$ admits a train-track representative
$f:\Gamma\to\Gamma$ with an irreducible transition matrix  and
with the  Perron-Frobenius eigenvalue $\lambda>1$, then for every $S\in\cvn$ there exist $0<C_1\le
C_2<\infty$ such that for every $n\ge 1$
\[
C_1\lambda^n\le \Lambda(S, S\phi^n) \le C_2 \lambda^n.
\]
\end{enumerate}
\end{prop}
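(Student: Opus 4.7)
The plan is to prove (1) by first reducing to the case $q=1$ and then combining Remark~\ref{rem:candidates} with the growth asymptotics for $\|\phi^n(w)\|$ coming from the iteration of an improved relative train-track map, as analysed by Levitt~\cite{Levitt}. For the reduction, write $n=kq+r$ with $0\le r<q$, so that $\phi^n=\alpha^k\phi^r$. Sub-multiplicativity of $\Lambda$ (immediate from its definition as a supremum of ratios of translation lengths) combined with the $\Out(\FN)$-invariance $\Lambda(T\psi,T'\psi)=\Lambda(T,T')$ yields
\[
\frac{\Lambda(S,S\alpha^k)}{\max_{0\le r<q}\Lambda(S,S\phi^{-r})} \;\le\; \Lambda(S,S\phi^n) \;\le\; \max_{0\le r<q}\Lambda(S,S\phi^{r})\cdot \Lambda(S,S\alpha^k),
\]
in which the two maxima are finite constants depending only on $S$, $\phi$, and $q$. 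Since $\lambda^k$ and $\lambda^{n/q}$, as well as $k^m$ and $n^m$, agree up to multiplicative constants when $k=\lfloor n/q\rfloor$, the conclusion of (1) for $\phi$ follows from the case $q=1$ applied to $\alpha=\phi^q$.

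Assume henceforth that $\phi$ itself admits an improved relative train-track representative $f:\Gamma\to\Gamma$. Let $S_0\in\cvn$ be a tree whose quotient graph is $\Gamma$ with the simplicial metric (after the usual minor adjustment, e.g.\ collapsing valence-$2$ vertices, so that it represents a valid chart). After a volume normalization we may assume $S_0\in\cvv$ and that $S_0$ lies in some open simplex $\Delta\subseteq\cvv$. For any other $S\in\cvn$ the length functions $\|\cdot\|_S$ and $\|\cdot\|_{S_0}$ are bi-Lipschitz equivalent on $\FN\smsm\{1\}$ (the bi-Lipschitz constants being controlled by $\Lambda(S,S_0)$ and $\Lambda(S_0,S)$), so $\Lambda(S,S\phi^n)$ and $\Lambda(S_0,S_0\phi^n)$ differ by a multiplicative constant, and we may work with $S_0$. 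By Remark~\ref{rem:candidates} there is a finite set $C_\Delta\subseteq\FN\smsm\{1\}$ such that
\[
\Lambda(S_0,S_0\phi^n)\;=\;\max_{w\in C_\Delta}\frac{\|\phi^n(w)\|_{S_0}}{\|w\|_{S_0}},
\]
where $\|\phi^n(w)\|_{S_0}$ is (up to a uniform scaling) the number of edges in the cyclic reduction of the edge-path $f^n(w)$ in $\Gamma$.

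The key analytic input is Levitt's analysis in~\cite{Levitt} of the iteration of improved relative train-track maps. It implies that for every $w\in \FN\smsm\{1\}$ the quantity $\|\phi^n(w)\|_{S_0}$ is pinched between positive constant multiples of $\lambda^n n^{d(w)}$, where $d(w)\in\{0,1,\dots,m\}$ is determined by the deepest exponentially growing stratum with PF-eigenvalue $\lambda$ visited (with appropriate multiplicity) by the iterates $f^n(w)$, and $m$ is the maximum of $d(w)$ over all $w$ (bounded above by the number of exponentially growing strata with PF-eigenvalue $\lambda$). The upper bound in (1) follows by taking the maximum of these uniform bounds over the finitely many $w\in C_\Delta$. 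For the lower bound one exhibits a single $w_0\in\FN\smsm\{1\}$ with $d(w_0)=m$; such a $w_0$ exists by the very definition of $m$ and may be taken as a suitable concatenation of edges coming from the critical strata, giving
\[
\Lambda(S_0,S_0\phi^n)\;\ge\;\frac{\|\phi^n(w_0)\|_{S_0}}{\|w_0\|_{S_0}}\;\ge\;C_1\lambda^n n^m.
\]

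Part (2) is the special case in which the transition matrix $M(f)$ of the train-track representative is itself irreducible with PF-eigenvalue $\lambda>1$. Standard Perron-Frobenius theory, after passing to a suitable power to handle imprimitivity, gives componentwise bounds $c\lambda^n\le (M(f)^n)_{ij}\le c'\lambda^n$, whence $\|\phi^n(w)\|_{S_0}$ is pinched between constant multiples of $\lambda^n$ for every $w\in \FN\smsm\{1\}$; combined with the candidate-set argument this yields $m=0$ and the asserted bounds. The main obstacle is extracting Levitt's precise growth asymptotic in a form uniform over the candidate set $C_\Delta$; once that input is in hand, the rest of the argument consists of the reduction step, the passage to the reference tree $S_0$, and standard Perron-Frobenius bookkeeping.
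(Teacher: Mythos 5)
Your argument follows essentially the same route as the paper's: pass to a reference tree $T$ determined by the improved relative train track $f:\Gamma\to\Gamma$, use the finite candidate set $C_\Delta$ of Remark~\ref{rem:candidates} to replace the sup by a finite max, invoke Levitt's growth asymptotics from \cite{Levitt}, exhibit a witness $w_0$ realising the maximal growth type for the lower bound, and handle the passage from $\alpha=\phi^q$ to $\phi$ and from $T$ to an arbitrary $S\in\cvn$ by bounded multiplicative distortion. Your reduction step via submultiplicativity of $\Lambda$ and $\Out(F_N)$-invariance is correct and is just a slightly more formal version of what the paper does.

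One inaccuracy worth flagging: your paraphrase of Levitt's result is not right as stated. It is false that for every $w\ne 1$ the sequence $\|\phi^n(w)\|_{S_0}$ is pinched between positive multiples of $\lambda^n n^{d(w)}$ for some $d(w)\ge 0$; a $w$ carried by the polynomially growing strata, or by exponential strata whose Perron--Frobenius eigenvalue is strictly smaller than $\lambda$, grows strictly slower than $\lambda^n$. The correct statement (Levitt, Theorem~6.2, as cited in the paper) is that $\|\alpha^n(w)\|_T$ grows like $(\lambda')^n n^{m'}$ for some $\lambda'$ in the finite set of PF-eigenvalues of the exponential strata (together with $1$) and some $m'$ in a finite set of non-negative integers. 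This misstatement does not actually break your argument — every such term is still $O(\lambda^n n^m)$, and the witness $w_0$ supplies the matching lower bound — but the claim as you wrote it should be corrected for the proof to read as rigorous.
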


\begin{proof}

(1) Let $T\in \cvv$ be the point corresponding to the improved relative
train-track  $f:\Gamma\to\Gamma$, where all edges of $\Gamma$ are
given equal length. Put $L=\{1\}$ if $f$ has no exponentially growing
strata. Otherwise let $\lambda_1 \ge \dots \ge \lambda_k>1$ be all the Perron-Frobenius eigenvalues of the exponentially growing
strata of $f$ and put $L=\{\lambda_1,\dots, \lambda_k, 1\}$. Finally
put $\lambda=\max L$. Thus $\lambda\ge 1$ and $\lambda=1$ if and only
if $f$ has no exponential strata.

A result of Levitt~\cite[Theorem 6.2]{Levitt} shows that there is a finite subset $M$ of $\Z_{\ge 0}$ such that for every
non-trivial $w\in F_N$ there is some $(\lambda',m')\in L\times M$ such that the sequence $||\alpha^n(w)||_T$ grows like
$(\lambda')^n n^{m'}$.  Moreover, there exists some element $1\ne w_0\in F_N$  such that
$||\alpha^n(w_0)||_T$ grows as $\lambda^n n^m$ and such that if some other $w\ne 1$ has $||\alpha^n(w)||_T$ growing as $\lambda^nn^{m'}$ then $m'\le m$.

Let $D=C_\Delta$ be the finite subset of $F_N$ as
in Remark~\ref{rem:candidates}, where $\Delta$ is the open simplex in
$\cvv$ containing $T$. Therefore for every $n\ge 1$ we have 
$\Lambda(T,T\phi^n)=\max_{w\in D}
\frac{||\alpha^n(w)||_T}{||w||_T}$. Moreover, 
through 
replacing $D$ by
$D\cup\{w_0\}$ we can assume that $w_0\in D$.

It follows that $\Lambda(T, T\alpha^n)=\max_{w\in D}
\frac{||\alpha^n(w)||_T}{||w||_T}$ grows like $\lambda^n n^m$.

Now let $n\ge 1$ and write $n=qn_1+r$ where $n_1\ge 0$ and $0\le
r\le q-1$ are integers. As we have seen, 
$\Lambda(T,T\alpha^{n_1})=\max_{w\in D}
\frac{||\phi^{n_1}(w)||_T}{||w||_T}$ grows like $\lambda^{n_1}n_1^m$. Since
$0\le r\le q-1$, applying $\phi^r$ distorts $||.||_T$ by a bounded
multiplicative amount. Therefore $\Lambda(T,T\phi^n)=\max_{w\in D}
\frac{||\phi^n(w)||_T}{||w||_T}$ grows as $\lambda^{n/q}(n/q)^m$, that
is, as $\lambda^{n/q}n^m$.

Since $T$ and $S$ are $F_N$-equivariantly quasi-isometric, it
follows that $\Lambda(S, S\phi^n)=\Lambda_A(\phi^n)$ also grows like
$\lambda^{n/q} n^m$, and the conclusion of part (1) of the proposition follows.

(2) The proof of part (2) is known (e.g. see Theorem~8.1 in \cite{FM11}) and is simpler than the proof of part (1), and we leave the details to the
reader. The key point is that in this case for every non-trivial $w\in
F_N$, such that the conjugacy class of $w$ is not $\phi$-periodic, the
sequence $||\phi^n(w)||_S$ grows like $\lambda^n$.
\end{proof}

\begin{cor}\label{cor:power}
Let $\phi\in\Out(F_N)$, let $S\in\cvn$ and let 
$\lambda(\phi)$ be the algebraic stretching factor of $\phi$.

Then there is an integer $m\ge 0$ such that for every $S\in\cvn$ there are some $C_1,C_2>0$ such that

\[
C_1\,\lambda(\phi)^n \, n^m\le \Lambda(S, S\phi^n) \le C_2 \,\lambda(\phi)^n \, n^m
\]
for all $n\ge 1$.
\end{cor}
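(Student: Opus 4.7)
The strategy is to reduce Corollary~\ref{cor:power} to part~(1) of Proposition~\ref{prop:power} by identifying the algebraic stretching factor $\lambda(\phi)$ with the quantity $\lambda^{1/q}$ that appears there.

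First I would invoke the Bestvina-Feighn-Handel improved relative train track machinery: there exists an integer $q \ge 1$ such that $\alpha := \phi^q$ admits an improved relative train-track representative $f : \Gamma \to \Gamma$. Applying Proposition~\ref{prop:power}(1) to this $\alpha$ and $\phi$ yields an integer $m \ge 0$ and, for every $S \in \cvn$, constants $0 < C_1 \le C_2 < \infty$ so that
\[
C_1 \, \lambda^{n/q} \, n^m \le \Lambda(S, S\phi^n) \le C_2 \, \lambda^{n/q} \, n^m
\]
for all $n \ge 1$, where $\lambda$ is the maximal Perron-Frobenius eigenvalue of the exponentially growing strata of $f$ (and $\lambda = 1$ if there are none).

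The crux of the argument is then to verify the identity $\lambda(\phi) = \lambda^{1/q}$. On one hand, from the displayed inequality one obtains $\lim_{n\to\infty} \sqrt[n]{\Lambda(S, S\phi^n)} = \lambda^{1/q}$, since the polynomial factor $n^m$ contributes nothing in the $n$-th root limit. On the other hand, the inequality $\|\phi^n(w)\|_S \le \Lambda(S, S\phi^n) \, \|w\|_S$, valid for all nontrivial $w \in F_N$, yields $\lim_{n\to\infty} \sqrt[n]{\|\phi^n(w)\|_S} \le \lambda^{1/q}$ for every $w$, so $\lambda(\phi) \le \lambda^{1/q}$. The reverse inequality uses the existence (as in the proof of Proposition~\ref{prop:power}(1), via Levitt's result) of a witness $w_0 \in F_N \smallsetminus\{1\}$ whose orbit $\|\alpha^n(w_0)\|_T$ grows like $\lambda^n n^m$; combined with the fact that $T, S \in \cvn$ are $F_N$-equivariantly quasi-isometric and with the bounded distortion of applying $\phi^r$ for $0 \le r \le q-1$, one deduces that $\lim_{n\to\infty} \sqrt[n]{\|\phi^n(w_0)\|_S} = \lambda^{1/q}$, proving $\lambda(\phi) \ge \lambda^{1/q}$.

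Substituting $\lambda^{n/q} = \lambda(\phi)^n$ into the displayed inequality immediately gives the desired bound
\[
C_1 \, \lambda(\phi)^n \, n^m \le \Lambda(S, S\phi^n) \le C_2 \, \lambda(\phi)^n \, n^m.
\]
The main technical obstacle is the identification $\lambda(\phi) = \lambda^{1/q}$, which hinges on the Levitt growth-type result that is already used inside the proof of Proposition~\ref{prop:power}(1); everything else is a routine translation between the $\Lambda(S, S\phi^n)$ point of view and the $\|\phi^n(w)\|_S$ point of view.
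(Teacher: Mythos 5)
Your proof is correct, and it reduces to Proposition~\ref{prop:power}(1) in exactly the same way the paper does (pass to a power $\alpha=\phi^q$ that has an improved relative train-track representative, then translate the resulting bound). The only substantive difference is how you establish the identification $\lambda(\phi)=\lambda^{1/q}$. The paper takes this for granted by invoking the background fact, stated earlier in the text, that the algebraic stretching factor $\lambda(\alpha)$ equals the maximal Perron--Frobenius eigenvalue of the transition matrix of any relative train-track representative of $\alpha$, together with the elementary identity $\lambda(\phi^q)=\lambda(\phi)^q$. You instead re-derive this identification from the two-sided estimate of Proposition~\ref{prop:power}(1) itself: taking $n$-th roots kills the polynomial factor and shows $\lim_n \Lambda(S,S\phi^n)^{1/n}=\lambda^{1/q}$, the inequality $\|\phi^n(w)\|_S\le\Lambda(S,S\phi^n)\,\|w\|_S$ gives $\lambda(\phi)\le\lambda^{1/q}$, and the witness $w_0$ from Levitt's theorem supplies the reverse inequality. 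This is a bit longer but more self-contained, since it does not presuppose the standard fact relating $\lambda(\phi)$ to Perron--Frobenius eigenvalues of train-track maps; it is a legitimate alternative to the step the paper cites without proof. Everything else (the reduction, the use of quasi-isometry to pass from $T$ to an arbitrary $S$, the handling of residues $n=qn_1+r$) matches the paper's argument.
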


\begin{proof}
It is known~\cite{BFH00} that some positive power $\alpha=\phi^q$ of $\phi$ admits
an improved relative train track representative. 

In this case we have $\lambda(\alpha)=\lambda(\phi^q)=\lambda(\phi)^q$, so that $[\lambda(\alpha)]^{1/q}=\lambda(\phi)$.
The conclusion of the corollary now follows directly from part (1) of Proposition~\ref{prop:power}.
\end{proof}

Now Corollary~\ref{cor:power} 
(applied to $S= T_A$, which gives $\Lambda(S, S \phi^n) = \Lambda_A(\phi^n)$)
and Theorem~\ref{cor:gen2'} directly imply Theorem~\ref{thm:p} from the Introduction:

\begin{thm}\label{thm:p'}
Let $A$ be a free basis of $F_N$ and let $\phi\in\Out(F_N)$ and let $
\lambda(\phi)$ be the algebraic stretching factor of $\phi$. Then there exist constants $c_1,c_2>0$ and an integer $m\ge 0$ such that for every $n\ge 1$ we have

\[
c_1  \, \lambda(\phi)^n \,  n^m\le \lambda_A(\phi^n) \le c_2  \,\lambda(\phi)^n \, n^m.
\]
Moreover, if $\phi$ admits an expanding train-track representative with
an irreducible transition matrix (e.g. if $\phi$ is fully
irreducible), then $m=0$ and $\lambda(\phi)>1$.
\qed
\end{thm}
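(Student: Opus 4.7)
The plan is to sandwich $\lambda_A(\phi^n)$ between two constant multiples of the extremal stretching factor $\Lambda_A(\phi^n)$ using Theorem~\ref{cor:gen2'}, and then to control the growth of $\Lambda_A(\phi^n)$ in $n$ via Corollary~\ref{cor:power}. Essentially no new work is required; this is a direct combination of the two main tools already established in this section.

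First I would apply Corollary~\ref{cor:power} with the specific base-point $S = T_A \in \cvn$. Because $\Lambda(T_A, T_A\phi^n) = \Lambda_A(\phi^n)$ by Definition~\ref{defn:gen-stretch}, this yields an integer $m \ge 0$ (depending on $\phi$) and constants $C_1, C_2 > 0$ (depending on $\phi$ and $A$) such that
\[
C_1\, \lambda(\phi)^n\, n^m \le \Lambda_A(\phi^n) \le C_2\, \lambda(\phi)^n\, n^m
\]
for every $n \ge 1$. Next I would invoke Theorem~\ref{cor:gen2'}: there exists a constant $\tau_N \in (0,1]$ depending only on $N$ such that
\[
\tau_N\, \Lambda_A(\psi) \le \lambda_A(\psi) \le \Lambda_A(\psi)
\]
for every $\psi \in \Out(F_N)$. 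Specializing $\psi = \phi^n$ and composing with the previous double inequality yields the desired bounds with $c_1 := \tau_N C_1$ and $c_2 := C_2$.

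For the ``moreover'' clause, if $\phi$ already admits an expanding train-track representative $f:\Gamma \to \Gamma$ with irreducible transition matrix, then part~(2) of Proposition~\ref{prop:power} applies directly, without the need to pass to any power $\phi^q$, and yields the sharper asymptotics $C_1 \lambda^n \le \Lambda(S, S\phi^n) \le C_2 \lambda^n$ with $\lambda > 1$ equal to the Perron-Frobenius eigenvalue. Since $\lambda(\phi) = \lambda$ in this setting, the exponent $m$ may be taken to be $0$, and applying Theorem~\ref{cor:gen2'} again passes this conclusion from $\Lambda_A(\phi^n)$ to $\lambda_A(\phi^n)$. There is no real obstacle: all the difficulty is already absorbed into Levitt's growth theorem (underlying Proposition~\ref{prop:power}) and the Patterson-Sullivan comparison of Section~\ref{Sect:PS} (underlying Theorem~\ref{cor:gen2'}).
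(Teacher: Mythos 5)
Your proof is correct and follows exactly the paper's own route: apply Corollary~\ref{cor:power} with $S=T_A$ to control $\Lambda_A(\phi^n)$, then sandwich $\lambda_A(\phi^n)$ using Theorem~\ref{cor:gen2'}, and handle the ``moreover'' clause via Proposition~\ref{prop:power}(2). The paper in fact states Theorem~\ref{thm:p'} without a written-out proof, noting that it follows directly from those two results, so your argument supplies precisely the intended reasoning.
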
 

\begin{exmp}
To demonstrate that the case $\lambda>1,m>0$ in Theorem~\ref{thm:p'} can indeed occur, we consider an example explained on p.  1138 in \cite{Levitt}. Let $N=4$ and $F_4=F(A)$ with $A=\{a_1,b_1,a_2,b_2\}$. Let an automorphism $\phi:F(A)\to F(A)$ be given by
\[
\phi(a_1)=a_1b_1,\  \phi(b_1)=a_1, \ \phi(a_2)=a_2b_1a_1,\ \phi(b_2)=a_2.
\]
For the $A$-rose $R_A$ the map $f:R_A\to R_A$, given by the same formula as $\phi$, is both a global train-track and a 2-strata relative train-track representative for $\phi$. The bottom stratum is $\{a_1,b_1\}$ and the top stratum is $\{a_2,b_2\}$. The transition matrices for both strata are the same and are equal to $B=\begin{bmatrix} 1& 1\\ 1 & 0\end{bmatrix}$, which has the Perron-Frobenius eigenvalue $\lambda=\frac{1+\sqrt{5}}{2}$. The transition matrix for $f$ has the form $M=\begin{bmatrix} B& 0\\ C & B\end{bmatrix}$ where $C=\begin{bmatrix} 1& 0\\ 0 & 0\end{bmatrix}$. By iterating $M$ one can see that $||\phi^n(a_2)||_A$ grows like $n\lambda^n$. One can then show that in this case $\Lambda_A(\phi^n)$  also grows as $n\lambda^n$.  Therefore, by Theorem~\ref{cor:gen2'}, $\lambda_A(\phi^n)$ grows as $n\lambda^n$ as well. 
\end{exmp}

\section{Other examples of filling tree-current morphisms}

The Patterson-Sullivan map $J_{PS}: \cvv\to \Curr(F_N)$, $T\mapsto \mu_T$,  is just one, albeit natural and useful, example of a filling tree-current morphism.
There are many other filling tree-current morphisms $J:\cvv\to Curr(F_N)$, and Corollary~\ref{cor:main} is applicable to all such $J$.
We indicate here some sources of such $J$, following the approach of Reiner Martin~\cite{Martin}.
The main idea is that if $t \mapsto\rho(t)>0$  is a
monotone decreasing  continuous function which
approaches $0$ as $t\to\infty$ ``sufficiently quickly'', then  
\[
J_\rho: \cvv\to\Curr(F_N), \, T\mapsto \sum_{[w] \neq [1]} \rho(||w||_T) \eta_w
\]
is a filling tree-current morphism.  

The summation here can be taken either over all non-trivial conjugacy classes $[w]$ of elements of $F_N$ (or over an $\Out(F_N)$-invariant set of such conjugacy classes although in the latter case one has to take additional care to ensure that 
the current 
$J_\rho(T)$ is 
filling).

Let us 
first 
observe that such a function $J_\rho$ is, 
by its construction, 
always $\Out(F_N)$-equivariant:  For any $T\in\cvv$ and $\phi\in\Out(F_N)$ we have
\[
\phi(J_\rho(T))=\sum_{[w] \neq [1]} \rho(||w||_T) \phi(\eta_w) =\sum_{[w] \neq [1]} \rho(||w||_T) \eta_{\phi(w)}
\]
and
\begin{gather*}
J_\rho(\phi T)= \sum_{[w] \neq [1]} \rho(||w||_{\phi T}) \eta_w= \sum_{[w] \neq [1]} \rho(||\phi^{-1}(w)||_T) \eta_w\\
\underset{\text{ with } u=\phi^{-1}(w)}{=} \,\,\,\sum_{[u] \neq [1]} \rho(||u||_T) \eta_{\phi(u)}=\phi(J_\rho(T)),
\end{gather*}
so that $J_\rho$ is indeed $\Out(F_N)$-equivariant.

We provide here a representative result of the kind described above:

\begin{prop}
The function 
\[
J:  \cvv \to \Curr(F_N), \, T\mapsto\sum_{[w] \neq [1]} e^{-e^{||w||_T}} \eta_w
\] 
where
the sum is taken over all non-trivial root-free conjugacy classes $[w]$ of
elements of $F_N$, is an injective filling tree-current morphism.
\end{prop}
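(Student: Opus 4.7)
The plan is to establish four things beyond the $\Out(F_N)$-equivariance (which was already verified in the computation immediately preceding the proposition, and which goes through identically for any family of weights depending only on translation lengths of conjugacy classes): (i) that the series defining $J(T)$ converges to a geodesic current, (ii) that $J$ is continuous, (iii) that each $J(T)$ is filling, and (iv) that $J$ is injective. I would handle (i)--(iii) by direct arguments using the rapid decay of the weights together with a standard result from \cite{KL3}, and I expect (iv) to be the main obstacle.

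For (i) and (ii), I would fix a chart $\alpha:F_N\to\pi_1(\Gamma,p)$ and a non-degenerate reduced edge-path $v$ in $\Gamma$. The weight $\langle v,\eta_w\rangle_\alpha$ is bounded by the cyclic length $|w|_\alpha$ of $[w]$; for $T\in\cvv$, $|w|_\alpha$ and $||w||_T$ are bi-Lipschitz comparable, with constants locally uniform on $\cvv$, and the number of root-free $[w]$ with $||w||_T\le R$ grows at most exponentially in $R$. Since $e^{-e^{||w||_T}}$ decays double-exponentially in $||w||_T$, the corresponding tail estimate forces
\[
\sum_{[w]\neq[1]} e^{-e^{||w||_T}}\,\langle v,\eta_w\rangle_\alpha < \infty,
\]
uniformly on compact subsets of $\cvv$. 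This exhibits $J(T)$ as a well-defined element of $\Curr(F_N)$ via its cylinder weights, and dominated convergence applied to each such cylinder weight yields continuity of $T\mapsto J(T)$.

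For (iii), I would observe that the support of each counting current $\eta_w$ contains the $F_N$-orbit of the pair $(w^{-\infty},w^{+\infty})\in\partial^2 F_N$ and of its flip, and that the union of these orbits, as $[w]$ ranges over all non-trivial root-free conjugacy classes, is dense in $\partial^2 F_N$. Since every coefficient $e^{-e^{||w||_T}}$ is strictly positive, $J(T)$ has full support on $\partial^2 F_N$. By \cite[Corollary 1.3]{KL3} --- the same tool used in the proof of Proposition~\ref{prop:KN} for the Patterson--Sullivan current --- any current with full support is filling, so $J(T)\in\Curr_{fill}(F_N)$.

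The hard part will be (iv), injectivity. Since a point of $\cvv$ is uniquely determined by its translation-length function, it suffices to recover $||w||_T$ for every non-trivial $w\in F_N$ from $J(T)$. My plan would be to exploit the extreme double-exponential decay of the weights, which is fast enough that each coefficient $e^{-e^{||w||_T}}$ should dominate the cumulative contribution of all conjugacy classes of strictly larger cyclic length. Concretely, for a root-free $[w_0]$ of cyclic length $L$ in the chart $\alpha$, taking $u$ to be a cyclic representative of $[w_0]$ as an edge-path in $\Gamma$ isolates the $[w_0]$-contribution from below, because $\langle u,\eta_{w'}\rangle_\alpha$ vanishes for every root-free $[w']$ with $|w'|_\alpha<L$ and for every $[w']\neq [w_0]$ with $|w'|_\alpha=L$. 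The tail from $|w'|_\alpha>L$ is crudely majorized by $\sum_{n>L}(2N-1)^n\cdot n\cdot e^{-e^{\epsilon n}}$ for some $\epsilon=\epsilon(T)>0$, which decays faster than any single exponential. Assuming $J(T_1)=J(T_2)$ and arguing by contradiction, I would pick $[w_0]$ minimizing $\min(||w||_{T_1},||w||_{T_2})$ among conjugacy classes on which $T_1,T_2$ disagree and compare the non-cancellable leading term $(e^{-e^{||w_0||_{T_1}}}-e^{-e^{||w_0||_{T_2}}})\langle u,\eta_{w_0}\rangle_\alpha$ with the aggregate tail. The technical heart of the argument is the bookkeeping of scales here: the leading coefficient can itself be small (of order $e^{-e^{||w_0||_{T_1}}}$), but the super-exponential smallness of the tail should win once the comparison is arranged at a sufficiently large $L$, possibly after applying the $\Out(F_N)$-equivariance to replace $[w_0]$ by $[\phi(w_0)]$ for some $\phi\in\Out(F_N)$ that pushes the scales into a favorable range.
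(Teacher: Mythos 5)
Parts (i)--(iii) of your plan are sound and essentially match the paper. The convergence and continuity arguments are the same double-exponential-versus-exponential comparison, locally uniform on compacts in $\cvv$ (the paper phrases the continuity step as a Weierstrass $M$-test; your ``dominated convergence on each cylinder weight'' is the same idea). Your full-support argument for (iii) is packaged a little differently---you appeal to density in $\partial^2 F_N$ of the union of $F_N$-orbits of fixed-point pairs $(w^{-\infty},w^{+\infty})$, whereas the paper fixes a reduced word $v$ and exhibits a root-free cyclically reduced $w$ containing $v$ as a subword, so that $\langle v,\eta_w\rangle_{\alpha}>0$---but both are correct and both then invoke \cite[Corollary 1.3]{KL3}.

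The genuine gap is in (iv), and it is in the place you yourself flag. Testing $J(T_1)-J(T_2)$ against a cylinder $Cyl_\alpha(u)$ and hoping the $[w_0]$-term dominates does not work as set up. A minor problem first: $\langle u,\eta_{w'}\rangle_\alpha$ does \emph{not} vanish for $[w']=[w_0^{-1}]$ at length $L$ --- indeed $\eta_{w_0^{-1}}=\eta_{w_0}$ --- so your vanishing claim at scale $L$ is wrong as stated (harmless here, since $\|w_0^{-1}\|_{T_i}=\|w_0\|_{T_i}$ merely doubles the leading term). The serious problem is the scale bookkeeping: write $M:=\min(\|w_0\|_{T_1},\|w_0\|_{T_2})$ and $L:=|w_0|_\alpha$; by bi-Lipschitz comparability $\epsilon L\le M\le cL$. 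The leading term has size at most $\sim e^{-e^{M}}\le e^{-e^{\epsilon L}}$ (and can be much smaller if $\|w_0\|_{T_1}$ and $\|w_0\|_{T_2}$ are close), while your tail majorant $\sum_{n>L}(2N-1)^n n\,e^{-e^{\epsilon n}}$ is of order $e^{-e^{\epsilon L}}$ up to sub-double-exponential factors. These are on the same double-exponential scale, with no sign control on the tail, so the leading term does not win; worse, if $M$ is close to $cL$ the leading term is of strictly smaller double-exponential order than the tail bound. The proposed fix via $\Out(F_N)$-equivariance is not worked out, and I do not see how it would resolve this.

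The idea you are missing is that a current is a Radon measure and can be evaluated on \emph{any} Borel set, not just cylinders. The paper evaluates $J(T)$ on the singleton $\{q_w\}$ where $q_w=(w^{-\infty},w^{+\infty})\in\partial^2 F_N$. Since $\eta_{w'}(\{q_w\})=1$ if $[w']=[w^{\pm1}]$ and $=0$ otherwise, one gets exactly $J(T)(\{q_w\})=2e^{-e^{\|w\|_T}}$ with \emph{no} tail to estimate. As $t\mapsto 2e^{-e^t}$ is strictly monotone, $J(T)$ determines $\|w\|_T$ for every root-free $w$, hence for every $w$, hence $T$. Replacing cylinder weights by singleton masses is what collapses your hard step into a one-line computation.
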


\begin{proof}
Fix a free basis $A$ of $F_N$ and let $T_A\in\cvv$ be the
Cayley graph of $F_N$ with respect to $A$, where all edges in $T_A$
have length $1/N$. For $w\in F_N$ denote by $||w||_A$ the cyclically
reduced length of $w$ over $A^{\pm 1}$. Thus $||w||_A=N||w||_{T_A}$.
We let $R_A=T_A/F_N$ be the quotient metric graph, which is a wedge of
$N$ loop-edges of length $1/N$ corresponding to elements of $A$. Let
$\alpha_A:F_N\to\pi_1(R_A)$ be the associated 
chart.

Let $T\in \cvv$ be arbitrary and let $U$ be a compact neighborhood of
$T$ in $\cvv$. There exists a constant $C\ge 1$ such that for every $w\in F_N$
and every $T'\in U$ we have $||w||_{T'}/C\le ||w||_A\le C||w||_{T'}$.
Note that for $n\ge 1$ the number of conjugacy classes $[w]$ with $||w||_A\le n$ is
$\le (2N)^{n}$.

To show that for each $T'\in U$ $J(T')$ is a geodesic current we only need to verify that
$J(T')$ takes finite values on all the two-sided cylinder sets in
$\partial^2 F_N$  determined by the 
chart 
$\alpha_A$. Since every
cylinder is contained in a cylinder determined by a single edge, it
suffices to show that for every oriented edge $e$ of $R_A$ we have
$\langle e, J(T')\rangle_{\alpha_A}<\infty$.

Let $T'\in U$ and let $e$ be an edge of $R_A$. For every integer $n\ge 1$ 
set
\[
b_n(e,T'):=\sum_{0.9n\le ||[w]||_A\le 1.1n} e^{-e^{||w||_{T'}}} \langle
e, \eta_w\rangle_{\alpha_A}.
\]

Then $\langle e, J(T')\rangle_{\alpha_A}\le \sum_{n=1}^{\infty}
b_n(e,T')$. The weight  $\langle e, \eta_w\rangle_{\alpha_A}$ is equal
to $1/N$ times the number of occurrences of $e^{\pm 1}$ in the
cyclically reduced circuit $\gamma_w$ in $R_A$ representing
$[w]$. Hence  $\langle e, \eta_w\rangle_{\alpha_A}\le
\frac{1}{N}||w||_A$. Since $T'\in U$, we have $||w||_{T'}\ge
  ||w||_A/C$.
Hence for every $n\ge 1$ and $T'\in U$ we have

\begin{gather*}
b_n(e,T')=\sum_{0.9n\le ||[w]||_A\le 1.1n} e^{-e^{||w||_{T'}}} \langle
e, \eta_w\rangle_{\alpha_A}\le\\ \frac{1}{N}\sum_{0.9n\le ||[w]||_A\le 1.1n}
e^{-e^{||w||_{A}/C}} ||w||_A \le 
\frac{1}{N}\sum_{0.9n\le ||[w]||_A\le 1.1n}
e^{-e^{0.9n/C}} 1.1n \le\\
\frac{1.1n}{N}
e^{-e^{0.9n/C}}(2N)^{1.1n}=\frac{1.1n}{N}
e^{-e^{0.9n/C}}e^{1.1n\log(2N)}=\\
\frac{1.1n}{N}e^{1.1n\log(2N)-e^{0.9n/C}}
\end{gather*}
From here we see that 
\[
\langle e, J(T')\rangle_{\alpha_A}\le \sum_{n=1}^{\infty}
b_n(e,T') \le C_1
\]
where $C_1=C_1(U)<\infty$ is some constant depending only on $U$.

Thus for every $T'\in U$ $J(T')$ is indeed a geodesic current on
$F_N$, and, in particular, $J(T)\in \Curr(F_N)$.

Note that the current $J(T)$ has full support. Indeed, for every non-trivial
freely reduced word $v$ over $A^{\pm 1}$ there exists a root-free cyclically
reduced word $w$ over $A^{\pm 1}$ containing $v$ as a subword. Then
$\langle v,\eta_w\rangle_{\alpha_A} >0$ and hence, from the definition
of $J(T)$, we see that $\langle v,J(T)\rangle_{\alpha_A} >0$. Thus
indeed $J(T)$ has full support and therefore, by a result of
Kapovich-Lustig~\cite{KL3}, the current $J(T)$ is filling.

Since an automorphism of $F_N$ permutes the set of all root-free
non-trivial conjugacy classes in $F_N$, it follows from the definition of $J$ that for every $T\in\cvv$
and every $\phi\in\Out(F_N)$ we have $J(\phi T)=\phi J(T)$.

Thus we have constructed an $\Out(F_N)$-equivariant map $J:\cvv\to
\Curr_{fill}(F_N)$.

We next observe that the map $J$ is continuous. The proof of continuity of $J$ is similar to the proof that $J(T)$ is
a current.  Let $T\in\cvv$, $U$ be a
compact neighborhood of $T$ in $\cvv$ and let $v$ be a non-trivial
freely reduced word over $A^{\pm 1}$.
Then for every $T'\in U$  we have
\[
\langle v, T'\rangle_{\alpha_A}=\sum_{[w]} \langle v, e^{-e^{||w||_{T'}}} \eta_w \rangle_{\alpha_A}=\sum_{[w]}  e^{-e^{||w||_{T'}}}  \langle v, w\rangle_{\alpha_A}
\]

One can then show, by an argument similar to that used above, that there exist positive constants $M_w>0$ (also depending on $U$ and $v$ but independent of $T'\in U$) such that for every $T'\in U$ we have $e^{-e^{||w||_{T'}}}  \langle v, w\rangle_{\alpha_A}\le M_w$ and that $\sum_{[w]} M_w<\infty$. By the Weirstrass $M$-test, it follows that series $\sum_{[w]}  e^{-e^{||w||_{T'}}}  \langle v, w\rangle_{\alpha_A}$, viewed as the sum of a functions on $U$, converges uniformly on $U$ and that its sum $\langle v, T'\rangle_{\alpha_A}$ is a continuous function on $U$.

Since $v$ was arbitrary, the explicit description of the
topology on $\Curr(F_N)$ (see~\cite{Ka2}) implies that $J$ is a continuous
function on $\cvv$, as required.

It remains to show that $J$ is injective. Fix an enumeration, without
repetitions, $w_1,w_2,\dots, $ of representatives of all the non-trivial root-free
conjugacy classes in $F_N$. Thus for every root-free non-trivial $w\in
F_N$ there exist unique distinct $m,n\ge 1$ such that $[w]=[w_m]$ and $[w^{-1}]=[w_n]$.

For every $i\ge 1$ 
set
$q_i=(w_i^{-\infty},w_i^\infty)\in \partial^2 F_N$ and 
set
$Q_i=\{q_i\}$. Note that for $i,j\ge 1$ we have $\eta_{w_j}(Q_i)=1$ if
$[w_i]=[w_j^{\pm 1}]$ and $\eta_{w_i}(Q_i)=0$ otherwise. Then, by
definition of $J$,  for every $T\in \cvv$ and $i\ge 1$
we have  $J(T)(Q_i)=2e^{-e^{||w_i||_T}}$. Since the function $t\mapsto
2e^{-e^t}$ is strictly monotone and thus injective, it follows that
knowing the current $J(T)$ we can recover $||w_i||_T$ for all $i\ge
1$. Hence we can recover the length function $||\cdot||_T:F_N\to\mathbb R$
and so we can also recover $T$ itself. Thus $J$ is injective,
as required.

\end{proof}

\section{Open problems}

As we have seen in Theorem~\ref{cor:gen2}, if $N\ge 2$, $A=\{a_1,\dots, a_N\}$ is a fixed free basis of $F_N=F(A)$, then for
\[
\rho_N=\inf_{\phi\in\Out(F_N)} \frac{\lambda_A(\phi)}{\Lambda_A(\phi)}
\]
we have $\rho_N>0$. In fact, one can show:

\begin{prop}\label{prop:limit}
We have $\displaystyle\lim_{N\to\infty} \rho_N=0$, and moreover, $\rho_N=O(\frac{1}{N})$, that is $\displaystyle\limsup_{N\to\infty}N\rho_N<\infty$.
\end{prop}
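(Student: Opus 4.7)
The strategy is to exhibit, for each $N\ge 2$, an explicit automorphism $\phi_N\in\Out(F_N)$ witnessing $\rho_N\le 2/N$. A natural candidate is the ``long multiplication''
\[
\phi_N(a_1)=a_1a_2\cdots a_N, \qquad \phi_N(a_i)=a_i \text{ for } 2\le i\le N,
\]
which lies in $\Out(F_N)$ as a composition of elementary Nielsen transformations. Applied to $w=a_1$ one has $||\phi_N(a_1)||_A=N$, so $\Lambda_A(\phi_N)\ge N$. Using the known fact recalled in the Introduction that $\Lambda_A(\phi)=\max_{1\le ||w||_A\le 2}||\phi(w)||_A/||w||_A$, I would run a short case check over cyclically reduced words of length $1$ and $2$ over $A^{\pm 1}$: the pairs $a_1a_j^{\pm 1}$ with $j\ne 1$ yield ratio at most $(N+1)/2$, the words $a_1^{\pm 2}$ yield ratio exactly $N$, and all remaining candidates yield ratio $\le 1$. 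This pins down $\Lambda_A(\phi_N)=N$ exactly.

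The second step is to bound $\lambda_A(\phi_N)$ from above by a constant independent of $N$. By the triangle inequality under free reduction, for any trajectory $y_1\cdots y_n$ of the simple non-backtracking random walk one has
\[
||\phi_N(y_1\cdots y_n)||_A \;\le\; \sum_{i=1}^n |\phi_N(y_i)|_A.
\]
The stationary distribution of this Markov chain is uniform on $A^{\pm 1}$, each letter having probability $1/(2N)$. Only $a_1^{\pm 1}$ contribute the large value $|\phi_N(\cdot)|_A=N$; the other $2N-2$ letters contribute $1$. Dividing by $n$ and invoking the ergodic theorem for this finite Markov chain, the right-hand side converges almost surely to
\[
\frac{2}{2N}\cdot N+\frac{2N-2}{2N}\cdot 1=\frac{2N-1}{N}\le 2,
\]
which gives $\lambda_A(\phi_N)\le 2$.

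Combining the two estimates yields $\rho_N\le \lambda_A(\phi_N)/\Lambda_A(\phi_N)\le 2/N$, so $\limsup_{N\to\infty} N\rho_N\le 2<\infty$ and in particular $\rho_N\to 0$, establishing both claims of the proposition. The main obstacle is the choice of test family $\phi_N$; once this is fixed, both estimates are elementary, the only non-routine point being the finite enumeration of short candidates needed to upgrade $\Lambda_A(\phi_N)\ge N$ to an equality.
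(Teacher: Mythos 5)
Your argument is correct and follows essentially the same strategy as the paper's: exhibit a test family of automorphisms that stretch a single generator by a factor $\Theta(N)$ while fixing the rest, so that $\Lambda_A$ is large but the generic stretch remains $O(1)$ because the non-backtracking walk spends only a $1/N$ fraction of its time on the distorted generator. The paper uses $\phi_{N,N}(a_1)=a_1a_2^N$ and controls $\lambda_A$ via the asymptotic letter frequencies $(a_i;w_n)_A/n\to 1/N$, whereas you use $\phi_N(a_1)=a_1a_2\cdots a_N$ and the stationary distribution of the walk; these are equivalent computations yielding $\rho_N\le 2/(N+1)$ and $\rho_N\le 2/N$ respectively, both giving the required $O(1/N)$ decay.
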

\begin{proof}
For $N\ge 2$ and $m\ge 1$  let $\phi_{N,m}:F(A)\to F(A)$ be given by $\phi_{N,m}(a_1)=a_1a_2^m$ and $\phi_{N,m}(a_i)=a_i$ for $2\le i\le N$.
It is not hard to see that $\Lambda_A(\phi_{N,m})=\sup_{w\ne 1} \frac{||\phi_{N,m}(w)||_A}{||w||_A}=m+1$.
For any freely reduced $w\in F(A)$ we have $||\phi_{N,m}(w)||_A\le (m+1)(a_1; w)_A +\sum_{i=2}^N (a_i; w)_A$, where  $(a_j;w)_A$ is the number of occurrences of $a_j^{\pm 1}$ in $w$.  On the other hand, if $w_n\in F(A)$ a ``long random'' freely reduced word of length $n$, then asymptotically we have $\frac{(a_i; w_n)_A}{n}
\overset{n \to \infty}{\longrightarrow}
\frac{1}{N}$ for $i=1,\dots, N$. Therefore
\[
\lambda_A(\phi_{N,m})\le \lim_{n\to \infty} \frac{(m+1)(a_1; w)_A +\sum_{i=2}^N (a_i; w)_A}{n}=(m+1)\frac{1}{N}+\frac{N-1}{N}=\frac{m}{N}+1.
\]
Hence
\[
\rho_N\le \frac{\lambda_A(\phi_{N,m})}{\Lambda_A(\phi_{N,m})}\le \frac{1+\frac{m}{N}}{m+1}.
\]
By taking $m=N$, we see that $\displaystyle \rho_N\le  \frac{2}{N +1}
\overset{n \to \infty}{\longrightarrow}
0$. Thus  $\displaystyle\lim_{N\to\infty} \rho_N=0$ and  $\displaystyle\limsup_{N\to\infty}N\rho_N<\infty$.
\end{proof}

Theorem~\ref{cor:gen2} and Proposition~\ref{prop:limit} naturally raise the following:
\begin{prob}\label{prop:1}
Are the values $\rho_N$ algorithmically computable in terms of $N$? What are the exact values of $\rho_N$ for small $N$, say for $N=2,3,4$? Is it true that $\rho_N\in \mathbb Q$?
What can be said about the precise asymptotics of $\rho_N$ as $N\to\infty$?  (Note that Proposition~\ref{prop:limit} shows that $\rho_N$ decays at least as fast as $1/N$.) 
\end{prob}

Theorem~\ref{thm:A} also motivates the definition of a new notion of a continuous symmetric and $\Out(F_N)$-invariant intersection number $I:\cvv\times\cvv\to \R_{>0}$, where for $T,S\in\cvv$ we define $I(T,S):=\langle S,\mu_T\rangle\langle T,\mu_S\rangle$.  The function $I(\cdot,\cdot)$ was  originally suggested to us by Arnaud Hilion as it appears to be relevant for attempting to define an analogue of the Weil-Petersson metric on $\cvv$. 

Since the Patterson-Sullivan currents are normalized so that $\langle T,\mu_T\rangle =1$, for $T=S$ we have $I(T,T)=1$.
\begin{prob}\label{prob:2} $ $

\noindent (a) Is it true that for every $T,S\in \cvv$ we have $I(T,S)\ge 1$~?

\smallskip
\noindent
(b)
Is it true that for $T,S\in \cvv$  we have $I(T,S)=1$ if and only if $T=S$~?

\end{prob}
It was shown in~\cite{KKS} that if $A$ is a free basis of $F_N$ and $\phi\in \Out(F_N)$ then $\lambda_A(\phi)\ge 1$ and that $\lambda_A(\phi)=1$ if and only if $T_A\phi=T_A$.
If $B$ is another free basis of $F_N$ and $\phi\in\Aut(F_N)$ is such that  $T_A\phi=T_B$, then $\langle T_B, \mu_{T_A}\rangle=\lambda_A(\phi)$ and $\langle T_A, \mu_{T_B}\rangle=\lambda_A(\phi^{-1})$. It follows that if $A, B$ are free bases of $F_N$ then $I(T_A,T_B)\ge 1$ and that $I(T_A,T_B)=1$ if and only if $T_A=T_B$. However, beyond this fact nothing appears to be known about the above question.

Recently  Pollicott and Sharp~\cite{PS}, using a different approach,  defined and studied a Weil-Petersson type metric on $\cvv$. It would be interesting to investigate the
relationship of their metric to the quantity $I(T,S)$ defined above.

\end{document}